\theoremstyle{definition}
\renewcommand{\forany}[1]{\ensuremath{{}^{\forall}\!#1}}
\newcommand{\sectionlab}[1]{\label{section:#1}}
\newcommand{\sectionref}[1]{\ref{section:#1}}
\newcommand{\subsectionlab}[1]{\label{subsection:#1}}
\newcommand{\subsectionref}[1]{Subsection~\ref{subsection:#1}}
\newcommand{\appendixlab}[1]{\label{appendix:#1}}
\newcommand{\appendixref}[1]{\ref{appendix:#1}}
\newtheorem{definition}{Definition}
\newcommand{\definitionlab}[1]{\label{definition:#1}}
\newcommand{\definitionref}[1]{Definition~\ref{definition:#1}}
\newtheorem{theorem}{Theorem}
\newcommand{\theoremlab}[1]{\label{theorem:#1}}
\newcommand{\theoremref}[1]{Theorem~\ref{theorem:#1}}
\newtheorem{corollary}{Corollary}
\newcommand{\corollarylab}[1]{\label{corollary:#1}}
\newtheorem{assumption}{Assumption}
\newcommand{\assumptionlab}[1]{\label{assumption:#1}}
\newcommand{\assumptionref}[1]{Assumption~\ref{assumption:#1}}
\newtheorem{remark}{Remark}
\newcommand{\remarklab}[1]{\label{remark:#1}}
\newcommand{\remarkref}[1]{Remark~\ref{remark:#1}}
\newcommand{\ff}[2]{\widetilde{f}(#1,#2)}
\newcommand{\hh}[2]{\widetilde{h}(#1,#2)}
\newcommand{\bregdiv}[3]{\mathcal{D}_{#1}({#2},{#3})}
\newcommand{\bigO}[1]{\mathcal{O}\left(#1\right)}
\renewcommand{\email}[1]{\texttt{(E-mail:~#1)}}
\title[Euler--Lagrange system for analyzing continuous-time accelerated gradient methods]%
{A unified Euler--Lagrange system for analyzing continuous-time accelerated gradient methods}
\author[M.~Toyoda]{Mitsuru Toyoda${}^1$}
\author[A.~Nishioka]{Akatsuki Nishioka${}^2$}
\author[M.~Tanaka]{Mirai Tanaka${}^3$}
\thanks{
	${}^1$Department of Mechanical Systems Engineering, 
	Tokyo Metropolitan University, 
	6-6 Asahigaoka, 
	Hino-shi, 
	191-0065 Tokyo, Japan \email{toyoda@tmu.ac.jp}}%
\thanks{
	${}^2$Department of Mathematical Informatics, 
	Graduate School of Information Science and Technology, 
	The University of Tokyo, 
	7-3-1 Hongo, 
	Bunkyo-ku, 
	113-8656 Tokyo, Japan \email{akatsuki\_nishioka@mist.i.u-tokyo.ac.jp}}%
\thanks{
	${}^3$Department of Fundamental Statistical Mathematics, 
	The Institute of Statistical Mathematics, 
	10-3 Midori-cho, 
	Tachikawa-shi, 
	190-8562 Tokyo, Japan, and 
	Continuous Optimization Team, 
	RIKEN Center for Advanced Intelligence Project, 
	Nihonbashi 1-chome Mitsui Building, 15th floor, 1-4-1 Nihonbashi, 
	Chuo-ku, 
	103-0027 Tokyo, Japan \email{mirai@ism.ac.jp}}
\begin{document}
\begin{abstract}
	This paper presents an Euler--Lagrange system
	for a continuous-time model of the accelerated gradient methods in smooth convex optimization
	and proposes an associated Lyapunov-function-based convergence analysis framework.
	Recently, ordinary differential equations (ODEs) with dumping terms have been developed to
	intuitively interpret the accelerated gradient methods, and
	the design of unified model describing the various individual ODE models have been examined.
	In existing reports, the Lagrangian, which results in the Euler-Lagrange equation, and the Lyapunov function for the convergence analysis
	have been separately proposed for each ODE.
	This paper proposes a unified Euler--Lagrange system and its Lyapunov function to cover the existing various models.
	In the convergence analysis using the Lyapunov function, a condition that parameters in the Lagrangian and Lyapunov function must satisfy is derived,
	and a parameter design for improving the convergence rate naturally results in the mysterious dumping coefficients.
	Especially, a symmetric Bregman divergence can lead to a relaxed condition of the parameters and
	a resulting improved convergence rate.
	As an application of this study, a slight modification in the Lyapunov function establishes the similar convergence proof
	for ODEs with smooth approximation in nondifferentiable objective function minimization.
\end{abstract}

\maketitle

\section{Introduction}
The accelerated gradient methods \cite{Ne18,Be17a} are variants of the gradient method
and performs theoretically and experimentally fast convergence by introducing
an additional state variable and associated technical coefficients in its updating equation.
Because of the improved convergence speed, it has been
widely applied in the various fields, including the image processing \cite{Be09b} and the model predictive control \cite{Pa14a,Ko15a}.
The aforementioned technical (and somewhat mysterious) updating equation of the original (i.e., discrete-time) accelerated gradient methods is derived by
a mathematical background in the convergence analysis and has an issue on the interpretability.
The intuitive explanation of the accelerated gradient methods has been tackled in existing reports \cite{Ah22},
and one of the simple representation is an ordinary differential equation (ODE) model
having a mysterious time-varying dumping term,
whose discretized model results in an accelerated gradient method
using a particular (and somewhat technical) discretization scheme \cite{Su16}.

The convergence properties, such as the convergence rate and the Lyapunov function,
in the continuous- and discrete-time domains need to be separately discussed.
More precisely, a discrete-time system (i.e., difference equation) obtained by the discretization
of a continuous-time system (i.e., ODE) depends on a discretization scheme;
therefore, a preferred convergence rate formula in the continuous-time domain does not
directly indicates that a resulting discrete-time system inherits similar convergence properties.
In particular, the scaling of the continuous-time axis results in
an arbitrary scaled expression of the convergence rate \cite{Us23}.
However, besides the ODE models can contribute to the better interpretability,
new ODE models and related discretization schemes have been expected to
develop new optimization methods and attracted much attention
in the fields of the mathematical optimization and numerical computation \cite{Ki23,Us23,To23a}.

The aforementioned system with the dumping term is interpreted as a mechanical system,
and its Lagrangian and resulting Euler--Lagrange equation have been recently explored \cite{Wi16,Wi21}.
On defining the Lagrangian,
the Euler--Lagrange equation is calculated using the partial derivatives of the Lagrangian \cite{Tr96,Ki23}
and has been utilized in the analysis of the mechanical systems (e.g., robotic systems \cite{Or89}) in the control theory.
The interpretable Lyapunov function, which describes a sort of the energy of a target system, is expected to be derived
by exploiting physical insights on the design of the Lagrangian, which is the difference of the kinetic and potential energies.
For existing Lyapunov functions individually proposed for each ODE model,
the design of unified Lagrangians and Lyapunov functions has been challenged in recent studies \cite{Wi16,Ki23,Us23}.
Motivated by the existing work summarized above,
this paper presents a unified Lagrangian and Lyapunov function generating the aforementioned various ODE models.
The main contributions of this paper are summarized as follows:
\begin{enumerate}
	\item By introducing a further generalized model of an existing unified Lagrangian and Lyapunov function than that proposed in \cite{Ki23},
	      the existing heuristic design of the Lyapunov function is simplified as a condition on the parameters in the proposed Lyapunov function.
	      On focusing on the improvement of the convergence rate, the existing technical (and mysterious) dumping coefficients are reasonably obtained.
	\item The convergence analysis with a symmetric Bregman divergence is performed.
	      On exploiting the symmetricity,
	      a relaxed condition and a wide class of Lyapunov functions, deriving preferred convergence properties, are available.
	      The analysis is a generalization of a technique improving the convergence rate by exploiting the Bregman divergence
	      with the $\ell_2$-norm \cite{At19}.
	\item The same convergence analysis follows from a simple modification in the Lagrangian and Lyapunov function for the smoothing technique in nonsmooth convex optimization problems.
	      This is a generalized result of a particular case of a non-strongly convex objective function and a dumping coefficient inversely proportional to the time \cite{Qu22}.
\end{enumerate}

The existing convergence rates are summarized in \tref{rates}.
The dumping terms that this paper addresses $D\dot{\bsx}(t)$,
$\left(\sqrt{\sigma}\left[3+\tanh^2\left([\sqrt{\sigma}/2]t\right)\right]\right)/
	\left(2\tanh\left([\sqrt{\sigma}/2]t\right)\right)\dot{\bsx}(t)$, and $(C/t)\dot{\bsx}(t)$
are discussed in \cite{Po87}, \cite{Ki23}, and \cite{Su16}, respectively.
The results in the second to forth rows of \tref{rates} is covered by that of the first row, given in this paper;
the corresponding sections are found in the first column of \tref{rates}.
\begin{table}[htbp]
	\centering
	\caption{Models and resulting convergence rates.}
	\tlab{rates}
	\footnotesize
	\begin{tabular}{llll}
		\hline
		Section                                                     & Model             & Damping                                & $f(\bsx(t)) - f(\bsx^*)$ \\\hline
		Sections \ref{section:Standard} and \ref{section:Symmetric} & Eq.~\eref{ODE}    & (see Eq.~\eref{ODE})                   &
		$\begin{array}{l} \mathcal{O}(\eexp{-\nu(t)})\end{array}$                                                                                    \\
		\subsectionref{TI}                                          & Eq.~\eref{StODE}  & $D\dot{\bsx}(t)$ (see \cite{Po87})     &
		$\begin{dcases}
				 \mathcal{O}\left(\exp\left(-[D/2]t\right)\right)                                            & (D \leq 2\sqrt{\sigma}), \\
				 \mathcal{O}\left(\exp\left(-\left[[D - \sqrt{D^2 - 4\sigma}]\middle/2\right]t\right)\right) & (D \geq 2\sqrt{\sigma}).
			 \end{dcases}$                            \\
		\subsectionref{hyp}                                         & Eq.~\eref{hypODE} &
		$\dfrac{\sqrt{\sigma}\left[3+\tanh^2\left([\sqrt{\sigma}/2]t\right)\right]}{2\tanh\left([\sqrt{\sigma}/2]t\right)}
		\dot{\bsx}(t)$ (see \cite{Ki23})                            &
		$\begin{dcases}
				 \bigO{1/\sinh^2\left([\sqrt{\sigma}/2]t\right)} & (\sigma > 0), \\
				 \bigO{t^{-2}}                                   & (\sigma = 0).
			 \end{dcases}$                                                             \\
		\subsectionref{C/t}                                         & Eq.~\eref{C/tODE} & $(C/t)\dot{\bsx}(t)$ (see \cite{Su16}) &
		$\begin{dcases}
				 \bigO{t^{-2C/3}} & (0 < C \leq 3),       \\
				 \bigO{t^{-2}}    & (3 \leq C < +\infty).
			 \end{dcases}$                                                                                    \\\hline
	\end{tabular}
\end{table}

\section{Preliminaries and Notations}
\begin{itemize}
	\item For notational simplicity, the substitution into a partial derivative (a column vector)
	      and its transpose (a row vector) are denoted by
	      \begin{equation}
		      \begin{dcases}
			      \nabla_\bsxi F(\bsxi_0,\bszeta_0) =  \left[\tpdif{F(\bsxi,\bszeta)}{\bsxi}\right]_{\bsxi = \bsxi_0, \bszeta = \bszeta_0}, \\
			      \nabla_\bsxi^\top F(\bsxi_0,\bszeta_0) = \left[\nabla_\bsxi F(\bsxi_0,\bszeta_0)\right]^\top  = \left[\tpdif{F(\bsxi,\bszeta)}{\bsxi^\top}\right]_{\bsxi = \bsxi_0, \bszeta = \bszeta_0},
		      \end{dcases}
	      \end{equation}
	      respectively. In the partial derivative of a function having just one argument, the subscript of the $\nabla$ operator is omitted.
	\item The Bregman divergence of two points $\bsx$ and $\bsy$ with respect to a convex function $h$ \cite{Wi16,Wi21} is defined by
	      \begin{equation}
		      \mathcal{D}_h(\bsy,\bsx) = h(\bsy) - h(\bsx) - \nabla^\top h(\bsx) (\bsy-\bsx) \geq 0.
	      \end{equation}
	      The Bregman three-point identity \cite{Wi16,Wi21}
	      \begin{equation}
		      [\nabla h(\bsx_2) - \nabla h(\bsx_3)]^\top[\bsx_1 - \bsx_2]
		      = - \mathcal{D}_h(\bsx_1,\bsx_2) + \mathcal{D}_h(\bsx_1,\bsx_3) - \mathcal{D}_h(\bsx_2,\bsx_3) \elab{threepoint}
	      \end{equation}
	      holds. The time derivative is calculated as follows:
	      \begin{equation}
		      \begin{split}
			       & \dif{\mathcal{D}_h(\bsy(t),\bsx(t))}{t}                                                                   \\
			       & = \nabla^\top h(\bsy(t))\dot{\bsy}(t) - \nabla^\top h(\bsx(t))\dot{\bsx}(t)
			      -\dif{\nabla^\top h(\bsx(t))}{t}(\bsy(t) - \bsx(t)) - \nabla^\top h(\bsx(t)) (\dot{\bsy}(t) - \dot{\bsx}(t)) \\
			       & = \dot{\bsy}^\top(t)[\nabla h(\bsy(t)) - \nabla h(\bsx(t))]
			      - \dif{\nabla^\top h(\bsx(t))}{t}(\bsy(t) - \bsx(t)).
		      \end{split}
		      \elab{bregdivdiff}
	      \end{equation}
	\item If an inequality $\bregdiv{f}{\bsy}{\bsx} \geq \sigma\bregdiv{h}{\bsy}{\bsx}\, (\sigma \geq 0)$ is satisfied,
	      $f$ is called $\sigma$-uniformly convex with respect to $h$ \cite{Wi21}.
	\item The hyperbolic functions are defined as follows: $\sinh(x) = (1/2)(\eexp{x}-\eexp{-x})$;
	      $\cosh(x) = (1/2)(\eexp{x}+\eexp{-x})$; and $\tanh(x) = \sinh(x)/\cosh(x) = (\eexp{x}-\eexp{-x})/(\eexp{x}+\eexp{-x})$.
	      An identity $\cosh^2(x) - \sinh^2(x) = 1$ holds.
	\item For a Lagrangian $\mathcal{L}(\bsx,\bsv,t)$,
	      the Euler--Lagrange equation (see, e.g., \cite{Or89}) is given by
	      \begin{equation}
		      \dif{}{t}\nabla_\bsv \mathcal{L}(\bsx(t),\dot{\bsx}(t),t) - \nabla_\bsx\mathcal{L}(\bsx(t),\dot{\bsx}(t),t)
		      = \bszero_{n_\bsx}.
		      \elab{EL}
	      \end{equation}
	\item For functions $f(t)$ and $g(t)$ on $[t_0,+\infty)$,
	      if there exist $C>0$ and $T\in[t_0,+\infty)$ such that $f(t) \leq Cg(t)$  for arbitrary $t \geq T$,
	      a notation $f(t) = \mathcal{O}(g(t))$ is used.
\end{itemize}
\section{Generalized Results using Bregman Divergence}\sectionlab{Standard}
This paper considers the continuous-time accelerated gradient methods for the following minimization problem:
\begin{equation}
	\minimize_{\bsx\in\R^{n_\bsx}}f(\bsx),
\end{equation}
where $f:\R^{n_\bsx}\to\R$ is assumed to be a differentiable convex function except for Section~\sectionref{smoothing}.
This section addresses a general case using the Bregman divergence as a metric of two points
and presents the Lagrangian, which specifies the resulting Euler--Lagrange equation,
and the Lyapunov function, playing an important role in the subsequent convergence analysis.
The following assumptions are placed throughout this paper.
\begin{assumption}\assumptionlab{0}
	\mbox{}
	\begin{itemize}
		\item $h$ is twice-differentiable and strongly convex.
		\item $f$ is differentiable (except for Section~\sectionref{smoothing}) $\sigma$-uniformly convex with respect to $h$ $(\sigma \geq 0)$.
		\item An ODE in each section has a solution on a considered time-interval.
	\end{itemize}
\end{assumption}
The existence of the solution has been reported for each model (e.g., that of the damping coefficient $C/t$ is in \cite{At19} and
the case of the damping using the hyperbolic functions is in \cite{Ki23}).
The subsequent analysis uses various time-varying parameters,
and their usage is summarized in \tref{params}.
\begin{table}[H]
	\centering
	\caption{Usage of the time-varying parameters.}
	\tlab{params}
	\footnotesize
	\begin{tabular}{lll}
		\hline
		Parameter   & Usage                                      & Examples                                                                                                                       \\\hline
		$\alpha(t)$ & Scaling in auxiliary variable              & $\bsz(t) = \bsx(t) + \eexp{-\alpha(t)}\dot{\bsx}(t)$                                                                           \\
		$\beta(t)$  & Convergence rate in existing studies       & $f(\bsx(t)) - f(\bsx^*) = \mathcal{O}(\eexp{-\beta(t)}), \, \dot{\beta}(t) \leq \eexp{\alpha(t)}$                              \\
		$\gamma(t)$ & Auxiliary parameter in existing studies    & $\dot{\gamma}(t) := \eexp{\alpha(t)}$ (Definition)                                                                             \\\hline
		$\delta(t)$ & Scaling in proposed Lagrangian             & $\mathcal{L}(\bsx,\bsv,t) = \eexp{\delta(t)}\left(\eexp{\eta(t)}\mathcal{D}_h(\bsx+\eexp{-\alpha(t)}\bsv,\bsx)-f(\bsx)\right)$ \\
		$\eta(t)$   & Scaling in proposed Lagrangian             & same as above                                                                                                                  \\
		$\nu(t)$    & Convergence rate in this study             & $f(\bsx(t)) - f(\bsx^*) = \mathcal{O}(\eexp{-\nu(t)}), \, \dot{\nu}(t) \leq \eexp{\alpha(t)}$                                  \\
		$\pi(t)$    & Parameter for symmetric Bregman divergence & $\eexp{\eta(t)+\pi(t)}\bregdiv{h}{\bsx(t)}{\bsx^*}$ in Lyapunov function
		\\		\hline
	\end{tabular}
\end{table}
Compared with existing reports \cite{Wi21,Ki23} using $\alpha(t), \beta(t)$, and $\gamma(t)$,
this paper uses $\alpha(t), \delta(t), \eta(t)$, and $\nu(t)$ for further generalized discussion.
The parameter $\pi(t)$ is only used in the case of the symmetric Bregman divergence.

For the aforementioned practical models (e.g., the damping coefficient $C/t$),
the Bregman divergence is introduced with the $\ell_2$-norm form $h(\cdot)=(1/2)\|\cdot\|_2^2$,
and the subsequent analysis of \subsectionref{ell2} sets the parameter $\eta(t)$ as $\eta(t):=2\alpha(t)$ (i.e., $\eta(t)$ depends on $\alpha(t)$.

\subsection{Lagrangian}
First, the Lagrangian deriving the continuous-time accelerated gradient methods is presented.
By using differentiable functions $\alpha(t), \delta(t)$, and $\eta(t)$ on $t\in[t_0,T]$,
a generalized form compared with the existing studies \cite{Wi21,Ki23} is given as follows:
\begin{equation}
	\mathcal{L}(\bsx,\bsv,t) = \eexp{\delta(t)}\bigl(\eexp{\eta(t)}\mathcal{D}_h(\bsx+\eexp{-\alpha(t)}\bsv,\bsx)-f(\bsx)\bigr).
	\elab{Lagrangian}
\end{equation}
\begin{theorem}\theoremlab{EL}
	Let an auxiliary state variable $\bsz(t) = \bsx(t) + \eexp{-\alpha(t)}\dot{\bsx}(t)$.
	Under \assumptionref{0}, the Euler--Lagrange equation associated with the Lagrangian \eref{Lagrangian}
	results in the following state equation of $[\bsx(t), \bsz(t)]^\top$:
	\begin{equation}
		\begin{dcases}
			\dot{\bsx}(t) = \eexp{\alpha(t)}(\bsz(t) - \bsx(t)), \\
			\nabla^2 h(\bsz(t))\dot{\bsz}(t) = -(\dot{\delta}(t) + \dot{\eta}(t) - \dot{\alpha}(t) - \eexp{\alpha(t)})[\nabla h(\bsz(t)) - \nabla h(\bsx(t))] - \eexp{-\eta(t)+\alpha(t)}\nabla f(\bsx(t)).
		\end{dcases}
		\elab{stateEq}
	\end{equation}
	The state equation above is expressed by the following ODE:
	\begin{equation}
		\begin{split}
			 & \ddot{\bsx}(t) + (\eexp{\alpha(t)} - \dot{\alpha}(t))\dot{\bsx}(t) \\
			 & +
			[\nabla^2 h(\bsz(t))]^{-1}
			\left(\eexp{\alpha(t)}(\dot{\delta}(t) +  \dot{\eta}(t) - \dot{\alpha}(t) - \eexp{\alpha(t)})[\nabla h(\bsz(t))- \nabla h(\bsx(t))]
			+\eexp{-\eta(t) + 2\alpha(t)}\nabla f(\bsx(t))\right) = \bszero_{n_\bsx}.
		\end{split}
		\elab{ODE}
	\end{equation}
\end{theorem}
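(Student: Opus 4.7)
The plan is a direct computation: expand the Euler--Lagrange formula \eref{EL} for the particular Lagrangian \eref{Lagrangian}, collect the two resulting contributions, and exhibit the cancellations that produce \eref{stateEq}; then rewrite the system as the single second-order ODE \eref{ODE} using the definition of $\bsz(t)$.

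First, I would isolate how $\bsv$ enters $\mathcal{L}$. Writing $\bsw := \bsx + \eexp{-\alpha(t)}\bsv$, the Bregman divergence \eref{Lagrangian} becomes $h(\bsw)-h(\bsx)-\eexp{-\alpha(t)}\nabla^\top h(\bsx)\bsv$, and $\bsv$ appears only through $\bsw$. Hence
\begin{equation}
\nabla_{\bsv}\mathcal{L}(\bsx,\bsv,t) = \eexp{\delta(t)+\eta(t)-\alpha(t)}\bigl[\nabla h(\bsw)-\nabla h(\bsx)\bigr],
\end{equation}
so at $\bsv=\dot{\bsx}(t)$ the argument $\bsw$ becomes exactly $\bsz(t)$. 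Differentiating this in $t$ yields one term proportional to $\dot{\delta}+\dot{\eta}-\dot{\alpha}$ times $\nabla h(\bsz)-\nabla h(\bsx)$, one term $\nabla^2 h(\bsz(t))\dot{\bsz}(t)$, and one term $-\nabla^2 h(\bsx(t))\dot{\bsx}(t)$, all multiplied by $\eexp{\delta+\eta-\alpha}$.

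Next I would compute $\nabla_{\bsx}\mathcal{L}$, which is the delicate part: $\bsx$ enters both as the second slot of $\mathcal{D}_h$ and (through the chain rule, because $\bsw=\bsx+\eexp{-\alpha}\bsv$) as part of $\bsw$. A careful application of the chain rule gives
\begin{equation}
\nabla_{\bsx}\mathcal{D}_h(\bsw,\bsx) = \nabla h(\bsw)-\nabla h(\bsx) - \eexp{-\alpha(t)}\nabla^2 h(\bsx)\bsv,
\end{equation}
and the $-f(\bsx)$ piece contributes $-\eexp{\delta(t)}\nabla f(\bsx)$. Forming $\frac{d}{dt}\nabla_{\bsv}\mathcal{L}-\nabla_{\bsx}\mathcal{L}=\bszero$, the two $\nabla^2 h(\bsx(t))\dot{\bsx}(t)$ terms cancel, and after factoring out the (nonzero) scalar $\eexp{\delta(t)+\eta(t)-\alpha(t)}$ one obtains exactly the second line of \eref{stateEq}. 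The first line of \eref{stateEq} is just a rearrangement of the definition $\bsz(t)=\bsx(t)+\eexp{-\alpha(t)}\dot{\bsx}(t)$.

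Finally, to obtain \eref{ODE} I would eliminate $\dot{\bsz}$ from \eref{stateEq}. Differentiating $\dot{\bsx}(t)=\eexp{\alpha(t)}(\bsz(t)-\bsx(t))$ in $t$ gives $\eexp{\alpha(t)}\dot{\bsz}(t)=\ddot{\bsx}(t)+(\eexp{\alpha(t)}-\dot{\alpha}(t))\dot{\bsx}(t)$. Multiplying the second line of \eref{stateEq} by $\eexp{\alpha(t)}$, substituting, and applying $[\nabla^2 h(\bsz(t))]^{-1}$ (well-defined by the strong convexity of $h$ in \assumptionref{0}) yields \eref{ODE} after moving all terms to one side. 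The main obstacle I anticipate is strictly bookkeeping in $\nabla_{\bsx}\mathcal{L}$ — keeping straight which occurrences of $\bsx$ flow through $\bsw$ and which do not, and then verifying that the Hessian term in $\bsx$ cancels against the corresponding piece from $\frac{d}{dt}\nabla_{\bsv}\mathcal{L}$; no analytical difficulty beyond that should arise.
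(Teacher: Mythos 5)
Your proposal is correct and follows essentially the same route as the paper: the same expansion of the Bregman divergence, the same partial derivatives $\nabla_\bsv\mathcal{L}$ and $\nabla_\bsx\mathcal{L}$ with the cancellation of the $\nabla^2 h(\bsx(t))\dot{\bsx}(t)$ terms, and the same elimination of $\dot{\bsz}$ via the time derivative of $\bsz(t)=\bsx(t)+\eexp{-\alpha(t)}\dot{\bsx}(t)$ to reach \eref{ODE}. Your explicit appeal to the strong convexity of $h$ to invert $\nabla^2 h(\bsz(t))$ is a small bookkeeping point the paper leaves implicit, but it is not a different argument.
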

\begin{proof}
	In principle, the substitution of the given Lagrangian \eref{Lagrangian} into the Euler--Lagrange equation \eref{EL} results in the claim of the theorem;
	however, for reader's convenience, the calculation is shown here. The Bregman divergence in the Lagrangian is rearranged as follows:
	\begin{equation}
		\begin{split}
			\mathcal{L}(\bsx,\bsv,t) & =
			\eexp{\delta(t)}\bigl(\eexp{\eta(t)}\mathcal{D}_h(\bsx+\eexp{-\alpha(t)}\bsv,\bsx)-f(\bsx)\bigr) \\
			                         & =\eexp{\delta(t)}\bigl(
			\eexp{\eta(t)}h(\bsx+\eexp{-\alpha(t)}\bsv) - \eexp{\eta(t)}h(\bsx) - \eexp{\eta(t)-\alpha(t)}\nabla^\top h(\bsx)\bsv
			-f(\bsx)
			\bigr).
		\end{split}
	\end{equation}
	The partial derivatives in the Euler--Lagrange equation are given as follows:
	\begin{align}
		\pdif{\mathcal{L}(\bsx(t),\dot{\bsx}(t),t)}{\bsx}
		                                                  & = \eexp{\delta(t)}\bigl(\eexp{\eta(t)}[\nabla h(\bsz(t))-\nabla h(\bsx(t))] - \eexp{\eta(t)-\alpha(t)}\nabla^2 h(\bsx(t))\dot{\bsx}(t) - \nabla f(\bsx(t))\bigr)                       \\
		                                                  & = \eexp{\delta(t)+\eta(t)-\alpha(t)}\bigl(\eexp{\alpha(t)}[\nabla h(\bsz(t))-\nabla h(\bsx(t))] - \nabla^2 h(\bsx(t))\dot{\bsx}(t) - \eexp{-\eta(t)+\alpha(t)}\nabla f(\bsx(t))\bigr), \\
		\pdif{\mathcal{L}(\bsx(t),\dot{\bsx}(t),t)}{\bsv} & = \eexp{\delta(t)+\eta(t)-\alpha(t)}\left[\nabla h(\bsz(t))- \nabla h(\bsx(t))\right],
	\end{align}
	and its time derivative is
	\begin{equation}
		\begin{split}
			 & \dif{}{t}\pdif{\mathcal{L}(\bsx(t),\dot{\bsx}(t),t)}{\bsv}                                                                                    \\
			 & = \eexp{\delta(t)+\eta(t)-\alpha(t)}\bigl((\dot{\delta}(t) +\dot{\eta}(t) - \dot{\alpha}(t))\left[\nabla h(\bsz(t))- \nabla h(\bsx(t))\right]
			+ \nabla^2 h(\bsz(t))\dot{\bsz}(t) - \nabla^2 h(\bsx(t))\dot{\bsx}(t)\bigr).
		\end{split}
		\elab{ddtdLdv}
	\end{equation}
	A term $-\eexp{\delta(t)+\eta(t) -\alpha(t)}\nabla^2 h(\bsx(t))\dot{\bsx}(t)$, which appears in both the side of the Euler--Lagrange equation, cancels out,
	and the substitution of the partial derivatives above into the Euler--Lagrange equation \eref{EL} results in the second equality in Eq.~\eref{stateEq} of the theorem.
	Furthermore,
	on using a time derivative of $\bsz(t)$,
	which is $\dot{\bsz}(t) = (1-\dot{\alpha}(t)\eexp{-\alpha(t)})\dot{\bsx}(t)+ \eexp{-\alpha(t)}\ddot{\bsx}(t)$,
	the second equation of Eq.~\eref{stateEq} results in Eq.~\eref{ODE}.
\end{proof}

\begin{remark}\remarklab{LagrangianScaling}
	The scaling with a constant in the Lagrangian does not affect the resulting ODEs, derived from the Euler--Lagrange equation.
	Therefore, the variation over time of $\eexp{\delta(t)}$ is essential while
	the initial value of $\delta(t)$ (i.e., $\delta(t_0)$) also does not affect the resulting ODEs.
\end{remark}
\begin{remark}
	Each time-varying parameters (e.g., $\alpha(t),\delta(t)$, and $\eta(t)$ in the theorem above) needs to be defined on $[t_0,T]$.
	The case of the dumping term $(C/t)\dot{\bsx}(t)$ is considered as an example;
	the dumping coefficient $C/t$ is not defined at $t = 0$, and the beginning of the time interval is set as $t_0 > 0$.
\end{remark}

\subsection{Lyapunov Function}
This section presents a Lyapunov function, which plays an important role in the convergence analysis,
for the proposed Euler--Lagrange system.
The subsequent proof focuses on an insight that
the fundamental of the proofs in \cite{Wi21,Ki23} comprises
decomposition technique using the Bregman three-point identity
and an assumption that $f$ is $\sigma$-uniformly convex with respect to $h$.
That is, the refinement of the conditions on the time-varying parameters is performed
using the newly introduced parameters $\delta(t), \eta(t)$, and $\nu(t)$.

First, the assumption on the differentiable time-varying parameters $\alpha(t), \delta(t), \eta(t), \nu(t)$ on $[t_0,T]$
is placed as follows:
\begin{assumption}\assumptionlab{general}
	\begin{equation}
		\begin{cases}
			\dot{\nu}(t) \leq \eexp{\alpha(t)},                                                                             \\
			-[\dot{\delta}(t) + \dot{\eta}(t) - \dot{\alpha}(t) - \eexp{\alpha(t)}]
			+ [\dot{\nu}(t) + \dot{\eta}(t)] \leq 0,                                                                        \\
			[\dot{\delta}(t) + \dot{\eta}(t) - \dot{\alpha}(t) - \eexp{\alpha(t)}] -\sigma\eexp{-\eta(t)+\alpha(t)} \leq 0, \\
			-[\dot{\delta}(t) + \dot{\eta}(t) - \dot{\alpha}(t)-\eexp{\alpha(t)}] \leq 0.
		\end{cases}
		\elab{general}
	\end{equation}
\end{assumption}
The following theorem provides a Lyapunov function, which results in the convergence rate formulae
in subsequent \theoremref{LyapunovBounds}.
\begin{theorem}\theoremlab{Lyapunov}
	Under Assumptions \ref{assumption:0} and \ref{assumption:general},
	the following function $V(t)$ of $\bsx(t)$ and $\bsz(t) = \bsx(t) + \eexp{-\alpha(t)}\dot{\bsx}(t)$ in the state equation \eref{stateEq}
	is nonincreasing on $[t_0,T]$, which means $\dot{V}(t) \leq  0\,(t\in[t_0,T])$:
	\begin{equation}
		V(t) = \eexp{\nu(t)}\bigl( \eexp{\eta(t)}\mathcal{D}_h(\bsx^*,\bsz(t)) + [f(\bsx(t)) - f(\bsx^*)] \bigr).
		\elab{Lyapunov}
	\end{equation}
\end{theorem}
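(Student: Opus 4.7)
The plan is to prove $\dot V(t)\le 0$ by differentiating $V$ explicitly, eliminating $\dot{\bsz}$ and $\dot{\bsx}$ via the state equation \eref{stateEq}, reorganising the resulting expression through the Bregman three-point identity \eref{threepoint}, and finally controlling the only remaining inner product by the $\sigma$-uniform convexity of $f$ with respect to $h$. The four lines of Assumption~\ref{assumption:general} should then appear naturally, one per summand, as exactly the sign conditions required to make $\dot V(t)$ nonpositive.

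Concretely, I would first apply the product rule to $V(t)$ and compute $d\bregdiv{h}{\bsx^*}{\bsz(t)}/dt$ using \eref{bregdivdiff} with the constant argument $\bsx^*$ in the first slot; the $\dot{\bsy}$ term in \eref{bregdivdiff} vanishes, leaving $-(\bsx^*-\bsz)^\top\nabla^2 h(\bsz)\dot{\bsz}$. Substituting the second line of \eref{stateEq} replaces $\nabla^2 h(\bsz)\dot{\bsz}$ by a linear combination of $\nabla h(\bsz)-\nabla h(\bsx)$ and $\nabla f(\bsx)$. In parallel, the remaining term $\nabla^\top f(\bsx)\dot{\bsx}$ is rewritten using $\dot{\bsx}=e^{\alpha(t)}(\bsz-\bsx)$, so that the two $\nabla f(\bsx)$-contributions telescope into a single term $e^{\alpha}\nabla^\top f(\bsx)(\bsx^*-\bsx)$.

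Next, I would invoke the three-point identity \eref{threepoint} with $(\bsx_1,\bsx_2,\bsx_3)=(\bsx^*,\bsz(t),\bsx(t))$ to rewrite the inner product $[\nabla h(\bsz)-\nabla h(\bsx)]^\top(\bsx^*-\bsz)$ as $-\bregdiv{h}{\bsx^*}{\bsz}+\bregdiv{h}{\bsx^*}{\bsx}-\bregdiv{h}{\bsz}{\bsx}$, and then bound $\nabla^\top f(\bsx)(\bsx^*-\bsx)$ from above by $f(\bsx^*)-f(\bsx)-\sigma\bregdiv{h}{\bsx^*}{\bsx}$ using $\sigma$-uniform convexity. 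After collecting terms, the quotient $\dot V(t)/e^{\nu(t)}$ becomes a linear combination of the four nonnegative quantities $e^{\eta}\bregdiv{h}{\bsx^*}{\bsz}$, $\bregdiv{h}{\bsx^*}{\bsx}$, $\bregdiv{h}{\bsz}{\bsx}$ and $f(\bsx)-f(\bsx^*)$, with coefficients $\dot\nu+\dot\eta-(\dot\delta+\dot\eta-\dot\alpha-e^{\alpha})$, $e^{\eta}[(\dot\delta+\dot\eta-\dot\alpha-e^{\alpha})-\sigma e^{-\eta+\alpha}]$, $-e^{\eta}(\dot\delta+\dot\eta-\dot\alpha-e^{\alpha})$, and $\dot\nu-e^{\alpha}$, respectively. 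These correspond one-for-one to the four inequalities in \eref{general} and are therefore nonpositive, giving $\dot V(t)\le 0$.

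The main obstacle is essentially clerical: one must track four different exponential prefactors $e^{\alpha},e^{\delta},e^{\eta},e^{\nu}$ together with their derivatives without slipping a sign, particularly when transposing gradients and when invoking \eref{threepoint} with the correct argument ordering. A more conceptual point worth highlighting is that the two $\nabla f(\bsx)$-contributions, one coming from $d\bregdiv{h}{\bsx^*}{\bsz}/dt$ and one from $\nabla^\top f(\bsx)\dot{\bsx}$, must be combined \emph{before} convexity is applied, since each term alone cannot be estimated in a useful direction; it is their sum, involving $\bsx^*-\bsx$ rather than $\bsx^*-\bsz$, that the convexity inequality is tailored to.
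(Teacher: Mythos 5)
Your proposal is correct and follows essentially the same route as the paper's proof: differentiate $V$, eliminate $\nabla^2 h(\bsz)\dot{\bsz}$ via the state equation, apply the three-point identity with $(\bsx^*,\bsz,\bsx)$, merge the two $\nabla f$ contributions into $\eexp{\alpha}\nabla^\top f(\bsx)(\bsx^*-\bsx)$ before invoking $\sigma$-uniform convexity, and read off the four sign conditions of Assumption~\ref{assumption:general}. The only (immaterial) difference is bookkeeping: you keep $f(\bsx)-f(\bsx^*)$ as a fourth nonnegative quantity with coefficient $\dot\nu-\eexp{\alpha}$, whereas the paper first replaces $\dot\nu$ by $\eexp{\alpha}$ and absorbs that term into $-\mathcal{D}_f(\bsx^*,\bsx)$.
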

\begin{proof}
	On recalling the time derivative of the Bregman divergence \eref{bregdivdiff},
	that of $\mathcal{D}_h(\bsx^*,\bsz(t)) = h(\bsx^*) - h(\bsz(t)) - \nabla^\top h(\bsz(t))[\bsx^*-\bsz(t)]$ is calculated as follows:
	\begin{equation}
		\begin{split}
			 & \dif{\mathcal{D}_h(\bsx^*,\bsz(t))}{t}            = - [\nabla^2 h(\bsz(t))\dot{\bsz}(t)]^\top[\bsx^*-\bsz(t)]                                                            \\
			 & = (\dot{\delta}(t) + \dot{\eta}(t) - \dot{\alpha}(t) -\eexp{\alpha(t)})[\nabla h(\bsz(t)) - \nabla h(\bsx(t))]^\top[\bsx^* - \bsz(t)]
			+ \eexp{-\eta(t)+\alpha(t)}\nabla^\top f(\bsx(t))[\bsx^* - \bsz(t)]                                                                                                         \\
			 & = (\dot{\delta}(t) + \dot{\eta}(t) - \dot{\alpha}(t) -\eexp{\alpha(t)})[-\mathcal{D}_h(\bsx^*,\bsz(t)) + \mathcal{D}_h(\bsx^*,\bsx(t)) - \mathcal{D}_h(\bsz(t),\bsx(t))] \\
			 & \quad + \eexp{-\eta(t)+\alpha(t)}\nabla^\top f(\bsx(t))[\bsx^* - \bsz(t)].
		\end{split}
		\elab{dDhdt}
	\end{equation}
	The second equality uses Eq.~\eref{stateEq} (the expression of $\nabla^2 h(\bsz(t))\dot{\bsz}(t)$),
	and the last equality uses the Bregman three-point identity \eref{threepoint} for
	$(\bsx_1,\bsx_2,\bsx_3) = (\bsx^*,\bsz, \bsx)$.

	By using $\dot{\bsx}(t) = \eexp{\alpha(t)}(\bsz(t) - \bsx(t))$, the time derivative of $f(\bsx(t))$ is given as follows:
	\begin{equation}
		\dif{f(\bsx(t))}{t} = \nabla^\top f(\bsx(t))\dot{\bsx}(t) = \eexp{\alpha(t)}\nabla^\top f(\bsx(t))[\bsz(t)-\bsx(t)].
		\elab{dfxdt}
	\end{equation}
	The formulae of the time derivatives above result in the following time derivative of the Lyapunov function candidate $V(t)$:
	\begin{equation}
		\begin{split}
			\dot{V}(t) & = [\dot{\nu}(t) + \dot{\eta}(t)]\eexp{\nu(t)+\eta(t)}\mathcal{D}_h(\bsx^*,\bsz(t)) + \eexp{\nu(t)+\eta(t)}\dif{\mathcal{D}_h(\bsx^*,\bsz(t))}{t} \\
			           & + \dot{\nu}(t)\eexp{\nu(t)}[f(\bsx(t)) - f(\bsx^*)] + \eexp{\nu(t)}\dif{f(\bsx(t))}{t}.
		\end{split}
		\elab{Vdot}
	\end{equation}
	The following terms are taken into consideration here:
	\begin{enumerate}
		\item the last term $\eexp{-\eta(t)+\alpha(t)}\nabla^\top f(\bsx(t))[\bsx^* - \bsz(t)]$ in $\tdif{\mathcal{D}_h(\bsx^*,\bsz(t))}{t}$ of Eq.~\eref{dDhdt},
		\item the third and forth terms $\dot{\nu}(t)\eexp{\nu(t)}[f(\bsx(t)) - f(\bsx^*)] + \eexp{\nu(t)}\tdif{f(\bsx(t))}{t}$ in $\dot{V}(t)$ of Eq.~\eref{Vdot}.
	\end{enumerate}
	Inequalities $\dot{\nu}(t) \leq \eexp{\alpha(t)}$ (i.e., the first item of Eq.~\eref{general} in \assumptionref{general})
	and $\mathcal{D}_{f}(\bsx^*,\bsx(t)) \geq \sigma\mathcal{D}_{h}(\bsx^*,\bsx(t))$ (i.e., the $\sigma$-uniform convexity of $h$ with respect to $h$)
	result in the following inequality:
	\begin{equation}
		\begin{split}
			 & \eexp{\nu(t)+\alpha(t)}\nabla^\top f(\bsx(t))[\bsx^* - \bsz(t)]
			+ \dot{\nu}(t)\eexp{\nu(t)}[f(\bsx(t)) - f(\bsx^*)] + \eexp{\nu(t)+\alpha(t)}\nabla^\top f(\bsx(t))[\bsz(t)-\bsx(t)]             \\
			 & \leq \eexp{\nu(t)+\alpha(t)}\left(\nabla^\top f(\bsx(t))[\bsx^* - \bsx(t)] + [f(\bsx(t)) - f(\bsx^*)]\right)                  \\
			 & = -\eexp{\nu(t)+\alpha(t)}\mathcal{D}_{f}(\bsx^*,\bsx(t)) \leq -\sigma\eexp{\nu(t)+\alpha(t)}\mathcal{D}_{h}(\bsx^*,\bsx(t)).
		\end{split}
	\end{equation}
	Consequently, $\dot{V}(t)$ is upper-bounded using $\mathcal{D}_{f}$ and $\mathcal{D}_h$ in $\tdif{\mathcal{D}_h(\bsx^*,\bsz(t))}{t}$ (Eq.~\eref{dDhdt}):
	\begin{equation}
		\begin{split}
			\dot{V}(t) & \leq \eexp{\nu(t)+\eta(t)}\bigl(-[\dot{\delta}(t) + \dot{\eta}(t) - \dot{\alpha}(t) - \eexp{\alpha(t)}]
			+ [\dot{\nu}(t) + \dot{\eta}(t)]\bigr)\mathcal{D}_h(\bsx^*,\bsz(t))                                                                                                                  \\
			           & + \eexp{\nu(t)+\eta(t)}\bigl([\dot{\delta}(t) + \dot{\eta}(t) - \dot{\alpha}(t) - \eexp{\alpha(t)}] -\sigma\eexp{-\eta(t)+\alpha(t)}\bigr)\mathcal{D}_h(\bsx^*,\bsx(t)) \\
			           & +\eexp{\nu(t)+\eta(t)}\bigl(-[\dot{\delta}(t) + \dot{\eta}(t) - \dot{\alpha}(t)-\eexp{\alpha(t)}]\bigr)\mathcal{D}_h(\bsz(t),\bsx(t)).
		\end{split}
		\elab{Ds}
	\end{equation}
	The Bregman divergence is always nonnegative,
	and the coefficient of each Bregman divergence is nonpositive because of the second, third, and forth items of the inequalities \eref{general} in \assumptionref{general};
	therefore, the inequality $\dot{V}(t) \leq 0$ holds.
\end{proof}
\begin{remark}\remarklab{LyapunovScaling}
	Similar to the Lagrangian, the same proof holds for a scaled Lyapunov function with a constant.
	Therefore, the initial value of $\nu(t)$ (i.e., $\nu(t_0)$) is arbitrary.
\end{remark}
By using the inequality \eref{Ds}, the following inequalities are obtained.
The first and second items show the convergence rates of the objective function value and
the Bregman divergence, respectively.
The third, forth, and fifth items are so-called integral estimates, which have been examined in existing studies
(e.g., \cite{At19,Qu22})
\begin{theorem}\theoremlab{LyapunovBounds}
	Under the same setting as \theoremref{Lyapunov}, the following inequalities holds at $t \in[ t_0,T]$:
	\begin{equation}
		\begin{dcases}
			f(\bsx(t)) - f(\bsx^*) \leq \eexp{-\nu(t)}V(t_0),                      \\
			\eexp{\eta(t)}\mathcal{D}_h(\bsx^*,\bsz(t)) \leq \eexp{-\nu(t)}V(t_0), \\
			\int_{t_0}^{+\infty}
			\eexp{\nu(t)+\eta(t)}\left([\dot{\delta}(t) + \dot{\eta}(t) - \dot{\alpha}(t) - \eexp{\alpha(t)}]
			- [\dot{\nu}(t) + \dot{\eta}(t)]\right)\mathcal{D}_h(\bsx^*,\bsz(t))\,\mathrm{d}t
			\leq V(t_0),                                                           \\
			\int_{t_0}^{+\infty}
			\eexp{\nu(t)+\eta(t)}\left(-[\dot{\delta}(t) + \dot{\eta}(t) - \dot{\alpha}(t) - \eexp{\alpha(t)}] + \sigma\eexp{-\eta(t)+\alpha(t)}\right)\mathcal{D}_h(\bsx^*,\bsx(t))\,
			\mathrm{d}t \leq V(t_0),                                               \\
			\int_{t_0}^{+\infty} \eexp{\nu(t)+\eta(t)}[\dot{\delta}(t) + \dot{\eta}(t) - \dot{\alpha}(t)-\eexp{\alpha(t)}]\mathcal{D}_h(\bsz(t),\bsx(t))\,\mathrm{d}t \leq V(t_0).
		\end{dcases}
	\end{equation}
\end{theorem}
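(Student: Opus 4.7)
The plan is to read off all five bounds from material already prepared in the proof of Theorem~\ref{theorem:Lyapunov}: the first two (pointwise) estimates will follow from plain monotonicity of $V$, while the three integral estimates will come from integrating the sharper inequality \eref{Ds} after exploiting its term-by-term sign structure.

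For the pointwise bounds, I would first note that $\mathcal{D}_h(\bsx^*,\bsz(t))\ge 0$ since $h$ is convex, and $f(\bsx(t))-f(\bsx^*)\ge 0$ by optimality of $\bsx^*$. Thus, in the definition \eref{Lyapunov}, each of the two summands inside the outer parenthesis is nonnegative, so each is individually bounded above by $\eexp{-\nu(t)}V(t)$. Combining this with the monotonicity $V(t)\le V(t_0)$ supplied by Theorem~\ref{theorem:Lyapunov} then yields the first two estimates immediately, after multiplying by $\eexp{-\nu(t)}$.

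For the three integral estimates, I would return to the bound \eref{Ds} on $\dot V(t)$. The sign analysis already carried out in the proof of Theorem~\ref{theorem:Lyapunov} shows that each of the three summands on the right-hand side of \eref{Ds} is individually nonpositive: items~2, 3, and 4 of Assumption~\ref{assumption:general} control the coefficients multiplying $\mathcal{D}_h(\bsx^*,\bsz(t))$, $\mathcal{D}_h(\bsx^*,\bsx(t))$, and $\mathcal{D}_h(\bsz(t),\bsx(t))$, respectively, and in each case the corresponding Bregman divergence is nonnegative. Because each summand is nonpositive, dropping any two of them only weakens the bound, yielding $\dot V(t)\le -I_k(t)$ for $k=1,2,3$, where $I_k(t)$ denotes the $k$th nonnegative integrand appearing in estimates~(3)--(5). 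Integrating on $[t_0,T]$ and using $V(T)\ge 0$ produces $\int_{t_0}^T I_k(t)\,\mathrm{d}t\le V(t_0)-V(T)\le V(t_0)$, and letting $T\to+\infty$ by monotone convergence delivers the three integral bounds.

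I do not expect a real obstacle here, since the whole statement is a bookkeeping consequence of \eref{Ds} together with the sign pattern enforced by Assumption~\ref{assumption:general}. The only place that warrants slight care is estimate~(5), where the bracket $[\dot\delta(t)+\dot\eta(t)-\dot\alpha(t)-\eexp{\alpha(t)}]$ appears with the opposite sign from the one it carries in \eref{Ds}; item~4 of Assumption~\ref{assumption:general} is precisely what guarantees that this bracket is nonnegative, so that the displayed integrand is itself nonnegative and the bound is both meaningful and compatible with the sign flip performed when passing from $\dot V(t)$ to $-\dot V(t)$.
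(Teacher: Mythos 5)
Your proposal is correct and follows essentially the same route as the paper: the pointwise bounds come from the nonnegativity of both summands of $V$ together with $V(t)\le V(t_0)$, and the integral bounds come from integrating \eref{Ds} after dropping the other nonpositive terms, using $V(t)\ge 0$, and letting the upper limit tend to $+\infty$. Your remark on the sign flip in the fifth estimate matches the role of the fourth item of Assumption~\ref{assumption:general} exactly as the paper uses it.
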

\begin{proof}
	The definition of the Lyapunov function $V(t)$ in Eq.~\eref{Lyapunov} and
	and an inequality $\dot{V}(t) \leq 0$ are utilized here.
	Inequalities $f(\bsx(t)) - f(\bsx^*) \leq 0$ and $\bregdiv{h}{\bsx(t)}{\bsx^*} \leq 0$ result in
	\begin{equation}
		\begin{dcases}
			\eexp{\nu(t)}(f(\bsx(t)) - f(\bsx^*)) \leq V(t) \leq V(t_0), \\
			\eexp{\nu(t)+\eta(t)}\bregdiv{h}{\bsx^*}{\bsz(t)} \leq V(t) \leq V(t_0).
		\end{dcases}
	\end{equation}
	The rearrangement of the inequality above results in the first and second items of the inequalities that the theorem claims.

	Next, the integrals of both the sides of the inequality \eref{Ds} is taken into consideration.
	The fifth item, addressing $\mathcal{D}_h(\bsz(t),\bsx(t))$, is considered here as the representative of
	the third, forth, and fifth items. An inequality
	\begin{equation}
		-V(t_0) \leq V(t) - V(t_0) \leq \int_{t_0}^{t} \eexp{\nu(t)+\eta(t)}(-[\dot{\delta}(t) + \dot{\eta}(t) - \dot{\alpha}(t)-\eexp{\alpha(t)}])\mathcal{D}_h(\bsz(t),\bsx(t))
		\,\mathrm{d}t
		\leq 0
	\end{equation}
	follows, and the rearrangement and taking the limit of $t\to+\infty$ result in the inequality.
	The third and forth items are similarly obtained.
\end{proof}

\subsection{Comparison with Lagrangian in Existing Study \cite{Ki23}}
The reports \cite{Wi16,Wi21} introduced two Lagrangians, called first and second Lagrangians,
separately for non-strongly convex and strongly convex objective functions.
To overcome this drawback, the report \cite{Ki23} proposed the following Lagrangian
using a new parameter $\gamma(t)$ satisfying $\dot{\gamma}(t) = \eexp{\alpha(t)}$:
\begin{equation}
	\begin{split}
		\mathcal{L}(\bsx,\bsv,t) & = \eexp{\alpha(t)+\gamma(t)}
		\bigl((1+\sigma\eexp{\beta(t)})\mathcal{D}_h(\bsx+\eexp{-\alpha(t)}\bsv,\bsx) - \eexp{\beta(t)}f(\bsx)\bigr) \\
		                         & = \eexp{\alpha(t)+\gamma(t)+\beta(t)}
		\bigl((1+\sigma\eexp{\beta(t)})\eexp{-\beta(t)}\mathcal{D}_h(\bsx+\eexp{-\alpha(t)}\bsv,\bsx) - f(\bsx)\bigr).
	\end{split}
	\elab{ExistingLagrangian2}
\end{equation}
By using the auxiliary variable $\bsz(t) = \bsx(t) + \eexp{-\alpha(t)}\dot{\bsx}(t)$, a Lyapunov function is given by
\begin{equation}
	\begin{split}
		V(t) & = (1+\sigma\eexp{\beta(t)})\mathcal{D}_h(\bsx^*,\bsz(t)) +\eexp{\beta(t)}(f(\bsx(t)) - f(\bsx^*))                                \\
		     & = \eexp{\beta(t)}\bigl( (1+\sigma\eexp{\beta(t)})\eexp{-\beta(t)}\mathcal{D}_h(\bsx^*,\bsz(t)) + [f(\bsx(t)) - f(\bsx^*)]\bigr).
	\end{split}
	\elab{ExistingLyapunovFunc2}
\end{equation}
$\beta(t)$ is chosen so that $\dot{\beta}(t) \leq \eexp{\alpha(t)}$;
actually, it is set as the upper bound $\dot{\gamma}(t) = \dot{\beta}(t) = \eexp{\alpha(t)}$
to improve the convergence rate. The Lagrangian above respects
the first and second Euler--Lagrange equations \cite{Wi16,Wi21}
and is introduced to combine both the equations (see, subsection D.1. of \cite{Ki23}) and still mysterious.
The setting is summarized as follows:
\begin{assumption}\assumptionlab{Kim}
	\begin{equation}
		\begin{dcases}
			h(\bsx) = \frac{1}{2}\|\bsx\|_2^2,           \\
			0 \leq \dot{\beta}(t) \leq \eexp{\alpha(t)}, \\
			\dot{\gamma}(t) = \eexp{\alpha(t)}.
		\end{dcases}
	\end{equation}
\end{assumption}
\begin{remark}
	The first inequality $0 \leq \dot{\beta}(t)$ of the second item is not explicitly shown in \cite{Wi21,Ki23}
	while ``ideal scaling conditions''\cite{Wi21} assumes only the second inequality $\dot{\beta}(t) \leq \eexp{\alpha(t)}$.
	However, both subsection A.2 of \cite{Wi21} and subsection C.3. of \cite{Ki23} use
	that $\sigma \dot{\beta}(t)\eexp{\beta(t)}$ is nonnegative, and hence so is $\dot{\beta}(t)$.
\end{remark}
In this paper, both the Lagrangian and Lyapunov function are generalized;
under the following setting, the proposed model of this study is consistent with that of \cite{Ki23} (see, subsection 3.1. and D.1. of \cite{Ki23} for further details).
That is, the substitution of the following parameters into the Lagrangian \eref{Lagrangian} and Lyapunov function \eref{Lyapunov}
results in the Lagrangian of \cite{Ki23} (Eq.~(4) of \cite{Ki23}, Eq.~\eref{ExistingLagrangian2} of this paper)
and Lyapunov function of \cite{Ki23} (Eq.~(63) of \cite{Ki23}, Eq.~\eref{ExistingLyapunovFunc2} of this paper):
\begin{equation}
	\begin{dcases}
		\dot{\nu}(t) = \dot{\beta}(t),                                                                                                      \\
		\eta(t) = - \beta(t) + \log(1+\sigma\eexp{\beta(t)})\,\,\text{i.e.,}\,\,\eexp{\eta(t)} = (1+\sigma\eexp{\beta(t)})\eexp{-\beta(t)}, \\
		\dot{\delta}(t) = \dot{\alpha}(t) + \dot{\gamma}(t) + \dot{\beta}(t) = \dot{\alpha}(t) + \eexp{\alpha(t)} + \dot{\beta}(t).
	\end{dcases}
	\elab{ExistingParameters}
\end{equation}
While \cite{Ki23} shows that the existing Lyapunov function \eref{ExistingLyapunovFunc2} works
(i.e., an inequality $\dot{V}(t) \leq 0$ holds) in the setting above,
this paper shows the convergence analysis using the proposed Lyapunov function \eref{Lyapunov}.
In other words, \assumptionref{general} is examined under the setting above;
if \assumptionref{general} is satisfied, \theoremref{Lyapunov} claims that the Lyapunov function \eref{ExistingLyapunovFunc2} works.
The following calculation follows:
\begin{equation}
	\dot{\eta}(t) = -\dot{\beta}(t) + \sigma\dot{\beta}(t)\eexp{\beta(t)}/(1+\sigma\eexp{\beta(t)}),
\end{equation}
\begin{equation}
	\dot{\delta}(t) + \dot{\eta}(t) - \dot{\alpha}(t)-\eexp{\alpha(t)} = \sigma\dot{\beta}(t)\eexp{\beta(t)}/(1+\sigma\eexp{\beta(t)}).
\end{equation}
By using the results above,
the left sides of the inequalities in \assumptionref{general} are calculated as follows:
\begin{equation}
	\begin{cases}
		\dot{\nu}(t) = \dot{\beta}(t) \leq \eexp{\alpha(t)},                                         \\
		-[\dot{\delta}(t) + \dot{\eta}(t) - \dot{\alpha}(t) - \eexp{\alpha(t)}]
		+ [\dot{\nu}(t) + \dot{\eta}(t)]
		= 0,                                                                                         \\
		[\dot{\delta}(t) + \dot{\eta}(t) - \dot{\alpha}(t) - \eexp{\alpha(t)}] -\sigma\eexp{-\eta(t)+\alpha(t)}
		= (\dot{\beta}(t) - \eexp{\alpha(t)})\sigma\eexp{\beta(t)}/(1+\sigma\eexp{\beta(t)}) \leq 0, \\
		-[\dot{\delta}(t) + \dot{\eta}(t) - \dot{\alpha}(t)-\eexp{\alpha(t)}]
		= -\sigma\dot{\beta}(t)\eexp{\beta(t)}/(1+\sigma\eexp{\beta(t)}) \leq 0.
	\end{cases}
\end{equation}
\assumptionref{general} is satisfied; therefore, the Lyapunov function \eref{ExistingLyapunovFunc2} works.
\begin{remark}\remarklab{Ki23}
	The substitution of the parameters \eref{ExistingParameters} into the ODE \eref{ODE}
	results in the following equation \cite{Ki23}
	(equivalently, the substitution of the parameters \eref{ExistingParameters} into the Lagrangian \eref{Lagrangian} results in the same ODE):
	\begin{equation}
		\ddot{\bsx}(t) +
		\left(\eexp{\alpha(t)} - \dot{\alpha}(t) + \frac{\sigma\dot{\beta}(t)\eexp{\beta(t)}}{1+\sigma\eexp{\beta(t)}}\right)\dot{\bsx}(t)
		+\frac{\eexp{2\alpha(t)+\beta(t)}}{1+\sigma\eexp{\beta(t)}}\nabla f(\bsx(t)) = \bszero_{n_\bsx}.
		\elab{ExistingODE}
	\end{equation}
	The coefficient of $\nabla f(\bsx(t))$, given by $\eexp{2\alpha(t)+\beta(t)}/(1+\sigma\eexp{\beta(t)})$,
	is not necessarily $1$ and not equal to that of $\ddot{\bsx}(t)$.
	Particularly, the subsequent models (e.g., a constant damping coefficient $D$ and a dumping coefficient $C/t$)
	are difficult to be covered by the model \eref{ExistingODE}.
	Actually, the case of a dumping term $D\dot{\bsx}(t)$ is asymptotically realized by the model \eref{ExistingODE} (see, subsections C.4, C.5 and D.1 in \cite{Ki23}).

	Furthermore, the subsequent discussion on the case of $h(\bsx) = (1/2)\|\bsx\|_2^2$ and the resulting ODE \eref{ODE}
	indicates that a condition $\eta(t) = 2\alpha(t)$ is necessary to match the coefficients of $\ddot{\bsx}(t)$ and $\nabla f(\bsx(t))$.
	The resulting ODE has a simple form:
	\begin{equation}
		\ddot{\bsx}(t) + \dot{\delta}(t)\dot{\bsx}(t) + \nabla f(\bsx(t)) = \bszero_{n_\bsx}.
	\end{equation}
	The subsequent actual models are the variants of the aforementioned ODE. In addition,
	in the case of $\eta(t) = 2\alpha(t)$, the Taylor expansion is applied to $h$ as follows:
	\begin{equation}
		h(\bsx+\eexp{-\alpha(t)}\bsv) = h(\bsx) + \eexp{-\alpha(t)}\nabla^\top h(\bsx)\bsv + \frac{\eexp{-2\alpha(t)}}{2}\bsv^\top\nabla^2 h(\bsx)\bsv
		+ \mathcal{O}(\|\eexp{-\alpha(t)}\bsv\|_2^3).
	\end{equation}
	Therefore, the Bregman divergence in the Lagrangian is approximated as
	\begin{equation}
		\eexp{2\alpha(t)}\mathcal{D}_h(\bsx+\eexp{-\alpha(t)}\bsv,\bsx) \approx \frac{1}{2}\bsv^\top\nabla^2 h(\bsx)\bsv,
	\end{equation}
	and it seems the quadratic form without the effect of $\eexp{\alpha(t)}$.
	Particularly, if $h(\bsx) = (1/2)\|\bsx\|_2^2$, the approximation above results in the equality (i.e., $\eexp{2\alpha(t)}\mathcal{D}_h(\bsx+\eexp{-\alpha(t)}\bsv,\bsx) = (1/2)\|\bsv\|_2^2$).
	This means that the resulting Lagrangian
	$\mathcal{L}(\bsx,\bsv,t) = \eexp{\delta(t)}\left((1/2)\|\bsv\|_2^2 - f(\bsx)\right)$ represents
	the scaled value of that for the equation of motion of a unit mass under the potential $f(\bsx)$,
	given by $(1/2)\|\bsv\|_2^2 - f(\bsx)$.
\end{remark}

\section{Case of Symmetric Bregman Divergence}\sectionlab{Symmetric}
As reported in \cite{At19}, the Bregman divergence associated with $h(\bsx) = (1/2)\|\bsx\|_2^2$
can include a term $(1/2)\|\bsx^*-\bsx(t)\|_2^2$ in a Lyapunov function
and have a potential to improve the convergence rate formula.
This section presents a generalization of the aforementioned technique and
shows that the symmetric Bregman divergence, which is a broader class of the $\ell_2$-norm example,
can add a new term $\bregdiv{h}{\bsx(t)}{\bsx^*}$ in the Lyapunov function proposed in the previous section.
A special case of $h(\bsx) = \frac{1}{2}\|\bsx\|_2^2$ and $(C/t)\dot{\bsx}(t)$ is consistent with \cite{At19}.
The differences between this section and \cite{At19} are summarized as follows:
\begin{itemize}
	\item This paper utilizes the auxiliary variable $\bsz(t) = \bsx(t) + \eexp{-\alpha(t)}\dot{\bsx}(t)$,
	      which is simultaneously designed with the Lyapunov function using the parameter $\eexp{-\alpha(t)}$.
	\item The derivation of the parameters is somewhat natural in this paper.
	      Precisely, the conditions of the parameters are naturally obtained to
	      satisfy $\dot{V}(t) \leq 0$.
\end{itemize}

\begin{assumption}\assumptionlab{general2}
	\begin{equation}
		\begin{cases}
			\forany{\bsx}, \bsy, \,\, \bregdiv{h}{\bsx}{\bsy} = \bregdiv{h}{\bsy}{\bsx},              \\
			\dot{\nu}(t) \leq \eexp{\alpha(t)},                                                       \\
			-[\dot{\delta}(t) + \dot{\eta}(t) - \dot{\alpha}(t) - \eexp{\alpha(t)}]
			+ [\dot{\nu}(t) + \dot{\eta}(t)] + \eexp{\pi(t)+\alpha(t)} \leq 0,                        \\
			[\dot{\delta}(t) + \dot{\eta}(t) - \dot{\alpha}(t) - \eexp{\alpha(t)}] -\sigma\eexp{-\eta(t)+\alpha(t)}
			-\eexp{\pi(t)+\alpha(t)} + [\dot{\nu}(t)+\dot{\eta}(t)+\dot{\pi}(t)]\eexp{\pi(t)} \leq 0, \\
			-[\dot{\delta}(t) + \dot{\eta}(t) - \dot{\alpha}(t)-\eexp{\alpha(t)}] - \eexp{\pi(t)+\alpha(t)} \leq 0.
		\end{cases}
		\elab{general2}
	\end{equation}
\end{assumption}
\begin{remark}
	\assumptionref{general} is a special case of \assumptionref{general2} by replacing $\eexp{\pi(t)}$ with zero (i.e., $\pi(t)\to -\infty\,\,(t\in[t_0,T])$).
	That is, the assumption of the symmetric Bregman divergence relaxes \assumptionref{general}.
	The parameter $\pi(t)$ is exploited in the case of the time-varying dumping coefficient $(C/t)\dot{\bsx}(t)$ in
	subsequent \subsectionref{C/t}.
\end{remark}
The following theorem claims that a term $\bregdiv{h}{\bsx(t)}{\bsx^*}$ can be included in the Lyapunov function given in \theoremref{Lyapunov} under the relaxed conditions in \assumptionref{general2}.
\begin{theorem}\theoremlab{Lyapunov2}
	Under \assumptionref{0} and \assumptionref{general2} for $\alpha(t), \delta(t), \eta(t), \nu(t)$, and $\pi(t)$,
	a function $V(t) = \eexp{\nu(t)}\bigl( \eexp{\eta(t)}[\bregdiv{h}{\bsx^*}{\bsz(t)} + \eexp{\pi(t)}\bregdiv{h}{\bsx(t)}{\bsx^*}]
		+ [f(\bsx(t)) - f(\bsx^*)] \bigr)$ with $\bsx(t)$ and $\bsz(t) = \bsx(t) + \eexp{-\alpha(t)}\dot{\bsx}(t)$ in the state equation \eref{stateEq} is nonincreasing on $[t_0,T]$, which means $\dot{V}(t) \leq  0\,(t\in[t_0,T])$.
\end{theorem}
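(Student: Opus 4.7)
The plan is to mirror the proof of \theoremref{Lyapunov} and isolate the contribution of the new term. Writing $V(t) = V_{\mathrm{old}}(t) + \eexp{\nu(t)+\eta(t)+\pi(t)}\bregdiv{h}{\bsx(t)}{\bsx^*}$, where $V_{\mathrm{old}}$ is the Lyapunov function of \theoremref{Lyapunov}, inequality \eref{Ds} already bounds $\dot{V}_{\mathrm{old}}(t)$ by a linear combination of $\bregdiv{h}{\bsx^*}{\bsz(t)}$, $\bregdiv{h}{\bsx^*}{\bsx(t)}$, and $\bregdiv{h}{\bsz(t)}{\bsx(t)}$. The $\sigma$-uniform convexity step and the $f$-related bookkeeping carried out there transfer verbatim, since the added term does not involve $f$; so all that remains is to compute the derivative of the new piece and recast it in those same three Bregman ``basis'' quantities.

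For the new piece, the product rule yields a factor-derivative contribution $[\dot{\nu}(t)+\dot{\eta}(t)+\dot{\pi}(t)]\eexp{\nu(t)+\eta(t)+\pi(t)}\bregdiv{h}{\bsx(t)}{\bsx^*}$. For $\tdif{\bregdiv{h}{\bsx(t)}{\bsx^*}}{t}$ I apply \eref{bregdivdiff} with the second argument frozen at $\bsx^*$, which leaves $\dot{\bsx}^\top(t)[\nabla h(\bsx(t)) - \nabla h(\bsx^*)]$; then substituting $\dot{\bsx}(t) = \eexp{\alpha(t)}(\bsz(t) - \bsx(t))$ and invoking the three-point identity \eref{threepoint} with $(\bsx_1,\bsx_2,\bsx_3) = (\bsz(t),\bsx(t),\bsx^*)$ produces $\eexp{\alpha(t)}\bigl[-\bregdiv{h}{\bsz(t)}{\bsx(t)} + \bregdiv{h}{\bsz(t)}{\bsx^*} - \bregdiv{h}{\bsx(t)}{\bsx^*}\bigr]$. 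The symmetry hypothesis (first line of \eref{general2}) then lets me rewrite $\bregdiv{h}{\bsz(t)}{\bsx^*} = \bregdiv{h}{\bsx^*}{\bsz(t)}$ and $\bregdiv{h}{\bsx(t)}{\bsx^*} = \bregdiv{h}{\bsx^*}{\bsx(t)}$, placing the new contribution in the same basis as \eref{Ds}.

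Collecting terms, the coefficient of $\bregdiv{h}{\bsx^*}{\bsz(t)}$ becomes $\bigl\{-[\dot{\delta}+\dot{\eta}-\dot{\alpha}-\eexp{\alpha}]+[\dot{\nu}+\dot{\eta}]+\eexp{\pi+\alpha}\bigr\}\eexp{\nu+\eta}$, that of $\bregdiv{h}{\bsx^*}{\bsx(t)}$ becomes $\bigl\{[\dot{\delta}+\dot{\eta}-\dot{\alpha}-\eexp{\alpha}]-\sigma\eexp{-\eta+\alpha}-\eexp{\pi+\alpha}+[\dot{\nu}+\dot{\eta}+\dot{\pi}]\eexp{\pi}\bigr\}\eexp{\nu+\eta}$, and that of $\bregdiv{h}{\bsz(t)}{\bsx(t)}$ becomes $\bigl\{-[\dot{\delta}+\dot{\eta}-\dot{\alpha}-\eexp{\alpha}]-\eexp{\pi+\alpha}\bigr\}\eexp{\nu+\eta}$. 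These are exactly the left-hand sides of the third, fourth, and fifth inequalities in \eref{general2}, and, combined with the second inequality (used as in \theoremref{Lyapunov} to absorb the $f$-related part via $\sigma$-uniform convexity) and the nonnegativity of Bregman divergences, they give $\dot{V}(t) \leq 0$. The main subtlety is that the three-point identity in the new step produces $\bregdiv{h}{\bsz(t)}{\bsx^*}$ with the arguments in the ``wrong'' order, so the symmetry hypothesis is not cosmetic: without it, the new contribution cannot be absorbed into the existing three-term basis, and the bookkeeping that identifies the combined coefficients with the items of \eref{general2} breaks down.
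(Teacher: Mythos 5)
Your proposal is correct and follows essentially the same route as the paper's proof: differentiate the added term $\eexp{\nu(t)+\eta(t)+\pi(t)}\bregdiv{h}{\bsx(t)}{\bsx^*}$, apply the three-point identity with $(\bsz(t),\bsx(t),\bsx^*)$, use the symmetry hypothesis to recast the result in the basis $\{\bregdiv{h}{\bsx^*}{\bsz(t)},\bregdiv{h}{\bsx^*}{\bsx(t)},\bregdiv{h}{\bsz(t)}{\bsx(t)}\}$, and combine with \eref{Ds} so that the collected coefficients are exactly the left-hand sides of the third through fifth items of \eref{general2}. Your closing remark that the symmetry assumption is structurally necessary (not cosmetic) is also consistent with how the paper uses it.
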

\begin{proof}
	The time derivative of $\bregdiv{h}{\bsx(t)}{\bsx^*}$ is calculated as follows:
	\begin{equation}
		\begin{split}
			\dif{\bregdiv{h}{\bsx(t)}{\bsx^*}}{t}
			 & = \dot{\bsx}^\top(t)[\nabla h(\bsx(t)) - \nabla h(\bsx^*)]  =
			\eexp{\alpha(t)}[\bsz(t)- \bsx(t)]^\top[\nabla h(\bsx(t)) - \nabla h(\bsx^*)]                                                      \\
			 & = \eexp{\alpha(t)}\left[ - \bregdiv{h}{\bsz(t)}{\bsx(t)} + \bregdiv{h}{\bsz(t)}{\bsx^*} - \bregdiv{h}{\bsx(t)}{\bsx^*} \right],
		\end{split}
	\end{equation}
	where the third equality follows form the Bregman three-point identity \eref{threepoint} for $(\bsx_1,\bsx_2,\bsx_3) = (\bsz(t),\bsx(t), \bsx^*)$.
	The time derivative of $V(t)$ is given as follows:
	\begin{equation}
		\begin{split}
			\dot{V}(t) & = [\dot{\nu}(t) + \dot{\eta}(t)]\eexp{\nu(t)+\eta(t)}\bregdiv{h}{\bsx^*}{\bsz(t)}
			+ \eexp{\nu(t)+\eta(t)}\dif{\bregdiv{h}{\bsx^*}{\bsz(t)}}{t}                                                          \\
			           & + [\dot{\nu}(t) + \dot{\eta}(t) + \dot{\pi}(t)] \eexp{\nu(t)+\eta(t)+\pi(t)}\bregdiv{h}{\bsx(t)}{\bsx^*}
			+ \eexp{\nu(t)+\eta(t)+\pi(t)}\dif{\bregdiv{h}{\bsx(t)}{\bsx^*}}{t}                                                   \\
			           & + \dot{\nu}(t)\eexp{\nu(t)}[f(\bsx(t)) - f(\bsx^*)] + \eexp{\nu(t)}\dif{f(\bsx(t))}{t}.
		\end{split}
	\end{equation}
	On taking into consideration the effect of terms having $\eexp{\pi(t)}$ above,
	the remaining part of this proof evaluates $\dot{V}(t)$ by using Eq.~\eref{Vdot} in the proof of \theoremref{Lyapunov}.
	The inequality \eref{Ds} is revised by taking into consideration $\eexp{\pi(t)}$,
	and the symmetry of the Bregman divergence terms in $\tdif{\bregdiv{h}{\bsx(t)}{\bsx^*}}{t}$
	results in the following upper bound of $\dot{V}(t)$:
	\begin{equation}
		\begin{split}
			 & \dot{V}(t)                                                                                                                                                       \\
			 & \leq \eexp{\nu(t)+\eta(t)}\bigl(-[\dot{\delta}(t) + \dot{\eta}(t) - \dot{\alpha}(t) - \eexp{\alpha(t)}]
			+ [\dot{\nu}(t) + \dot{\eta}(t)] + \eexp{\pi(t)+\alpha(t)}\bigr)\bregdiv{h}{\bsx^*}{\bsz(t)}                                                                        \\
			 & + \eexp{\nu(t)+\eta(t)}\bigl([\dot{\delta}(t) + \dot{\eta}(t) - \dot{\alpha}(t) - \eexp{\alpha(t)}] -\sigma\eexp{-\eta(t)+\alpha(t)}
			-\eexp{\pi(t)+\alpha(t)} + [\dot{\nu}(t)+\dot{\eta}(t)+\dot{\pi}(t)]\eexp{\pi(t)}\bigr)\bregdiv{h}{\bsx^*}{\bsx(t)}                                                 \\
			 & + \eexp{\nu(t)+\eta(t)}\bigl(-[\dot{\delta}(t) + \dot{\eta}(t) - \dot{\alpha}(t)-\eexp{\alpha(t)}] - \eexp{\pi(t)+\alpha(t)}\bigr)\bregdiv{h}{\bsz(t)}{\bsx(t)}.
		\end{split}
		\elab{Ds2}
	\end{equation}
	\assumptionref{general2} indicates that the coefficient of each Bregman divergence is nonpositive,
	the inequality $\dot{V}(t) \leq 0$ is obtained.
\end{proof}
Similar to \theoremref{LyapunovBounds}, obtained using \theoremref{Lyapunov},
\theoremref{Lyapunov2} provides upper bounds of
$f(\bsx(t)) - f(\bsx^*), \mathcal{D}_h(\bsx^*,\bsz(t)),
	\mathcal{D}_h(\bsx^*,\bsx(t))$, and $\mathcal{D}_h(\bsz(t),\bsx(t))$;
resulting inequalities are omitted here due to space limitation.

\subsection{Case of $\ell_2$-Norm}\subsectionlab{ell2}
The analysis in the previous section is somewhat overgeneralized
and has the various parameters ($\alpha(t), \delta(t), \eta(t)$, and $\nu(t)$) to be designed;
it is inconvenient in practical algorithm design.
This subsection presents the standard form covering a broader class of ODE models.
The following setting is taken into consideration here.
\begin{itemize}
	\item In the ODE \eref{ODE}, obtained from the Euler--Lagrange equation,
	      the conventional models have implicitly assumes that the coefficients of $\ddot{\bsx}(t)$ and $\nabla f(\bsx(t))$ are the same;
	      thus $\eta(t) = 2\alpha(t)$ is chosen to satisfy the requirement.
	\item The Bregman divergence is introduced with $h(\bsx) = (1/2)\|\bsx\|_2^2$.
\end{itemize}

\begin{assumption}\assumptionlab{para}
	\begin{equation}
		\begin{cases}
			h(\bsx) = \dfrac{1}{2}\|\bsx\|_2^2,                                                                                        \\
			\eta(t) = 2\alpha(t),                                                                                                      \\
			\dot{\nu}(t) \leq \eexp{\alpha(t)},                                                                                        \\
			-[\dot{\delta}(t) + \dot{\alpha}(t) - \eexp{\alpha(t)}] +[\dot{\nu}(t)+2\dot{\alpha}(t)] + \eexp{\pi(t)+\alpha(t)} \leq 0, \\
			[\dot{\delta}(t) + \dot{\alpha}(t) - \eexp{\alpha(t)}] -\sigma\eexp{-\alpha(t)}
			+ [\dot{\nu}(t) + 2\dot{\alpha}(t) + \dot{\pi}(t) - \eexp{\alpha(t)}]\eexp{\pi(t)} \leq 0,                                 \\
			-[\dot{\delta}(t) + \dot{\alpha}(t) - \eexp{\alpha(t)}] - \eexp{\pi(t)+\alpha(t)} \leq 0.
		\end{cases}
	\end{equation}
\end{assumption}
Under \assumptionref{para},
the coefficient of $\dot{\bsx}(t)$ in Eq.~\eref{ODE} results in $\dot{\delta}(t)$,
and the ODE \eref{ODE} is reduced to an ODE
\begin{equation}
	\ddot{\bsx}(t) + \dot{\delta}(t)\dot{\bsx}(t) + \nabla f(\bsx(t)) = \bszero_{n_\bsx}
	\elab{ODEStandard}
\end{equation}
and an equivalent state equation
\begin{equation}
	\begin{dcases}
		\dot{\bsx}(t) = \eexp{\alpha(t)}(\bsz(t) - \bsx(t)), \\
		\dot{\bsz}(t) = -(\dot{\delta}+\dot{\alpha}(t)-\eexp{\alpha(t)})[\bsz(t) - \bsx(t)] - \eexp{-\alpha(t)}\nabla f(\bsx(t)).
	\end{dcases}
	\elab{ELStandard}
\end{equation}
Hereafter, the term $\dot{\delta}(t)\dot{\bsx}(t)$ in Eq.~\eref{ELStandard} and $\dot{\delta}(t)$ are called
the dumping term and dumping coefficient, respectively.

Similar to the proof of \theoremref{LyapunovBounds},
the integration of both the sides of the inequality \eref{Ds2} results in the following theorem.
\begin{theorem}\theoremlab{ell2LyapunovBounds}
	Under the same setting as \theoremref{Lyapunov} with \assumptionref{para},
	the following inequality holds for $t \in[ t_0,T]$:
	\begin{equation}
		\int_{t_0}^{+\infty}\eexp{\nu(t)}\left(
		[\dot{\delta}(t) + \dot{\alpha}(t) - \eexp{\alpha(t)}] + \eexp{\pi(t)+\alpha(t)}
		\right)\frac{1}{2}\|\dot{\bsx}(t)\|_2^2\,\mathrm{d}t \leq V(t_0).
	\end{equation}
\end{theorem}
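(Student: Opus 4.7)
The plan is to specialize the general inequality \eref{Ds2} already obtained in the proof of \theoremref{Lyapunov2} to the $\ell_2$-norm setting and then integrate, exactly mirroring how \theoremref{LyapunovBounds} was extracted from \eref{Ds} in the earlier analysis. The target integrand involves only $\|\dot{\bsx}(t)\|_2^2$, which suggests that the relevant term in \eref{Ds2} is the one carrying $\bregdiv{h}{\bsz(t)}{\bsx(t)}$, so the whole argument reduces to isolating that one term and converting the Bregman divergence into a norm.

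First I would substitute $h(\bsx) = (1/2)\|\bsx\|_2^2$ and $\eta(t) = 2\alpha(t)$ into \eref{Ds2}. The coefficient of $\bregdiv{h}{\bsz(t)}{\bsx(t)}$ becomes $\eexp{\nu(t)+2\alpha(t)}\bigl(-[\dot{\delta}(t) + \dot{\alpha}(t) - \eexp{\alpha(t)}] - \eexp{\pi(t)+\alpha(t)}\bigr)$, using $\dot{\eta}(t) = 2\dot{\alpha}(t)$. Next I would use $\bsz(t) - \bsx(t) = \eexp{-\alpha(t)}\dot{\bsx}(t)$ to rewrite
\begin{equation}
    \bregdiv{h}{\bsz(t)}{\bsx(t)} = \tfrac{1}{2}\|\bsz(t)-\bsx(t)\|_2^2 = \tfrac{1}{2}\eexp{-2\alpha(t)}\|\dot{\bsx}(t)\|_2^2,
\end{equation}
so the $\eexp{2\alpha(t)}$ in the prefactor cancels and the full term collapses to $-\eexp{\nu(t)}\bigl([\dot{\delta}(t) + \dot{\alpha}(t) - \eexp{\alpha(t)}] + \eexp{\pi(t)+\alpha(t)}\bigr)\tfrac{1}{2}\|\dot{\bsx}(t)\|_2^2$. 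Under \assumptionref{para}, the last inequality guarantees this expression is nonpositive, matching the sign needed for the claim.

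Then I would observe that each of the three Bregman-divergence terms in \eref{Ds2} is nonpositive by \assumptionref{general2}/\assumptionref{para}, so dropping the two terms involving $\bregdiv{h}{\bsx^*}{\bsz(t)}$ and $\bregdiv{h}{\bsx^*}{\bsx(t)}$ only weakens the bound, yielding
\begin{equation}
    \dot{V}(t) \leq -\eexp{\nu(t)}\bigl([\dot{\delta}(t) + \dot{\alpha}(t) - \eexp{\alpha(t)}] + \eexp{\pi(t)+\alpha(t)}\bigr)\tfrac{1}{2}\|\dot{\bsx}(t)\|_2^2.
\end{equation}
Integrating from $t_0$ to $t$, together with the nonnegativity $V(t)\geq 0$ (which gives $V(t)-V(t_0) \geq -V(t_0)$), produces
\begin{equation}
    \int_{t_0}^{t}\eexp{\nu(s)}\bigl([\dot{\delta}(s) + \dot{\alpha}(s) - \eexp{\alpha(s)}] + \eexp{\pi(s)+\alpha(s)}\bigr)\tfrac{1}{2}\|\dot{\bsx}(s)\|_2^2\,\mathrm{d}s \leq V(t_0).
\end{equation}
Since the integrand is nonnegative, letting $t\to+\infty$ preserves the inequality by monotone convergence and gives the claim.

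There is no real obstacle here; the argument is a direct corollary of \theoremref{Lyapunov2} combined with the identity $\bsz(t)-\bsx(t)=\eexp{-\alpha(t)}\dot{\bsx}(t)$. The only bookkeeping care needed is to track the exponential prefactors so that the $\eexp{2\alpha(t)}$ from $\eexp{\eta(t)}$ exactly cancels the $\eexp{-2\alpha(t)}$ from the squared norm, which is precisely the reason the choice $\eta(t)=2\alpha(t)$ was imposed in \assumptionref{para}.
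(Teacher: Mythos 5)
Your proposal is correct and follows exactly the route the paper intends: the paper's own justification is the one-line remark that the theorem follows by integrating both sides of Eq.~\eref{Ds2} as in the proof of \theoremref{LyapunovBounds}, and your substitution of $h=(1/2)\|\cdot\|_2^2$, $\eta(t)=2\alpha(t)$, the identity $\mathcal{D}_h(\bsz(t),\bsx(t))=\tfrac{1}{2}\eexp{-2\alpha(t)}\|\dot{\bsx}(t)\|_2^2$, and the bound $-V(t_0)\leq V(t)-V(t_0)$ are precisely the details being elided. No gaps.
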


\subsection{Case of Time-Invariant Dumping Coefficient $D\dot{\bsx}(t)$}\subsectionlab{TI}
To take into consideration the design direction of the parameters,
this subsection addresses the time-invariant dumping coefficient $D\dot{\bsx}(t)$ under the strong convexity ($\sigma>0$).
In advance, the designed parameters are summarized below:
\begin{equation}
	\begin{dcases}
		\alpha(t) = \begin{cases}
			            \log(D/2)                          & (0 < D \leq 2\sqrt{\sigma}),       \\
			            \log([D - \sqrt{D^2 - 4\sigma}]/2) & (2\sqrt{\sigma} \leq D < +\infty),
		            \end{cases} \\
		\dot{\delta}(t) = D,                                                                \\
		\eta(t) = 2\alpha(t),                                                               \\
		\dot{\nu}(t) = \eexp{\alpha(t)}                                                     \\
		\text{($\pi(t)$ is not used, i.e., $\pi(t)=-\infty$ or $\eexp{\pi(t)}=0$).}
	\end{dcases}
	\elab{TIParams}
\end{equation}
As mentioned in Remarks \ref{remark:LagrangianScaling} and \ref{remark:LyapunovScaling},
the time variation is significant in the design of  $\delta(t)$ and $\nu(t)$;
therefore, the derivatives of them are indicated as above. The resulting ODE is
\begin{equation}
	\ddot{\bsx}(t) + D\dot{\bsx}(t) + \nabla f(\bsx(t)) = \bszero_{n_\bsx}.
	\elab{StODE}
\end{equation}
For $D \leq 2\sqrt{\sigma}$, the state equation with the auxiliary variable $\bsz(t) = \bsx(t) + \dot{\bsx}(t)/(D/2)$ is given as follows:
\begin{equation}
	\begin{dcases}
		\dot{\bsx}(t) = (D/2)[\bsz(t) - \bsx(t)], \\
		\dot{\bsz}(t) = -(D/2)[\bsz(t) - \bsx(t)] - (2/D) \nabla f(\bsx(t)).
	\end{dcases}
	\elab{StODE1}
\end{equation}
Besides, in the case of $D \geq 2\sqrt{\sigma}$,
the auxiliary variable is defined by $\bsz(t) = \bsx(t) + \dot{\bsx}(t)/([D - \sqrt{D^2 - 4\sigma}]/2)$,
and the state equation is as follows:
\begin{equation}
	\begin{dcases}
		\dot{\bsx}(t) = \frac{D - \sqrt{D^2 - 4\sigma}}{2}[\bsz(t) - \bsx(t)], \\
		\dot{\bsz}(t) = -\frac{D + \sqrt{D^2 - 4\sigma}}{2}[\bsx(t) - \bsz(t)]
		- \frac{2}{D - \sqrt{D^2 - 4\sigma}}\nabla f(\bsx(t)).
	\end{dcases}
	\elab{StODE2}
\end{equation}
As \remarkref{Ki23} indicates, the coefficient of $\nabla f(\bsx(t))$ is not necessarily $1$ in the model in \cite{Ki23} and inapplicable.
It should be noted that this paper has already provided the generalized Lyapunov function in \theoremref{Lyapunov}
and does not need to give a proof for each model separately (see, \cite{Wi16,Wi21}, subsection C.5. of \cite{Ki23}).

The aforementioned parameters \eref{TIParams} are derived here.
$\nu(t)$ is preferred to diverge fast because the convergence rate of the objective function value is $\mathcal{O}(\eexp{-\nu(t)})$.
On recalling the upper bound $\dot{\nu}(t) \leq \eexp{\alpha(t)}$,
whether both $\dot{\nu}(t)$ and $\eexp{\alpha(t)}$ can diverge or not is examined.
In the fifth item $-[\dot{\delta}(t) + \dot{\alpha}(t) - \eexp{\alpha(t)}] +[\dot{\nu}(t)+2\dot{\alpha}(t)] + \eexp{\pi(t)+\alpha(t)} \leq 0$
and sixth item $[\dot{\delta}(t) + \dot{\alpha}(t) - \eexp{\alpha(t)}] -\sigma\eexp{-\alpha(t)}
	+ [\dot{\nu}(t) + 2\dot{\alpha}(t) + \dot{\pi}(t) - \eexp{\alpha(t)}]\eexp{\pi(t)} \leq 0$ in \assumptionref{para},
the arrangement with respect to $[\dot{\delta}(t) + \dot{\alpha}(t) - \eexp{\alpha(t)}]-\eexp{\alpha(t)+\pi(t)}$ results in
\begin{equation}
	[\dot{\nu}(t)+2\dot{\alpha}(t)] \leq \sigma\eexp{-\alpha(t)} -[\dot{\nu}(t) + 2\dot{\alpha}(t) + \dot{\pi}(t)]\eexp{\pi(t)}.
\end{equation}
By using $\mathrm{d}\eexp{\pi(t)}/\mathrm{d}t = \dot{\pi}(t)\eexp{\pi(t)}$, an inequality
\begin{equation}
	[\dot{\nu}(t)+2\dot{\alpha}(t)](1+\eexp{\pi(t)}) \leq \sigma\eexp{-\alpha(t)} -\dif{\eexp{\pi(t)}}{t}
\end{equation}
is obtained. $\tdif{\eexp{\pi(t)}}{t}$ does not diverge to $-\infty$ because of $\eexp{\pi(t)} > 0$.
Besides, if the left side diverges to $+\infty$, the inequality does not hold.
Therefore, $\dot{\nu}(t)$ diverging to $+\infty$ cannot be taken in this framework.

Based on the aforementioned discussion and the study on the steady state behavior of the hyperbolic-function-based parameters (subsection C.4. of \cite{Ki23}),
the steady state situation of the parameters, which means that the derivative is zero and the parameter is a time-invariant constant,
is taken into consideration.
In the ODE \eref{ELStandard}, the dumping coefficient is set as  $\dot{\delta}(t) = D$.
$\alpha(t)$ is also set as a constant; $\dot{\alpha}(t) = 0$ and $\alpha(t) = \alpha_\text{s}$.
Base on the upper bound $\dot{\nu}(t) \leq \eexp{\alpha(t)} = \eexp{\alpha_\text{s}}$,
$\dot{\nu}(t)$ is set as its upper bound ($\dot{\nu}(t) = \eexp{\alpha_\text{s}}$).
Under this setting,
the convergence rate is $f(\bsx(t)) - f(\bsx^*) = \mathcal{O}(\eexp{-\nu(t)}) = \mathcal{O}(\exp(-\eexp{\alpha_\text{s}}t))$,
which means the linear (i.e., exponential) convergence. Therefore,
the remaining part takes the value of $\alpha_\text{s}$ as large as possible.
For a parameter $\pi(t)$, the steady state is taken into consideration and set as $\dot{\pi}(t) = 0$ and $\pi(t) = \pi_\text{s}$.
\assumptionref{para} is simplified as follows:
\begin{equation}
	\begin{cases}
		-[D - \eexp{\alpha_\text{s}}] +\eexp{\alpha_\text{s}} + \eexp{\pi_\text{s}+\alpha_\text{s}} \leq 0, \\
		[D - \eexp{\alpha_\text{s}}] -\sigma\eexp{-\alpha_\text{s}} \leq 0,                                 \\
		-[D - \eexp{\alpha_\text{s}}] - \eexp{\pi_\text{s}+\alpha_\text{s}} \leq 0.
	\end{cases}
	\elab{TICondition}
\end{equation}
The first and second items is rearranged with respect to $D$ as follows:
\begin{equation}
	2\eexp{\alpha_\text{s}} + \eexp{\pi_\text{s}+\alpha_\text{s}} \leq D \leq \eexp{\alpha_\text{s}}  + \sigma\eexp{-\alpha_\text{s}}.
\end{equation}
The third item results in $\eexp{\alpha_\text{s}} - \eexp{\pi_\text{s}+\alpha_\text{s}} \leq D$,
which is satisfied if the first inequality in the formula above holds.
The case that the two inequality above hold with equality
because the large value of left side $2\eexp{\alpha_\text{s}} + \eexp{\pi_\text{s}+\alpha_\text{s}}$ improves
the convergence rate.
Solving with respect to $\eexp{\alpha_\text{s}}$ results in $\eexp{\alpha_\text{s}} = \sqrt{\sigma/(1+\eexp{\pi_\text{s}})}$;
therefore, $\eexp{\pi_\text{s}} \to 0$ is preferred to improve the convergence rate,
and $\eexp{\alpha_\text{s}} = \sqrt{\sigma}$ is obtained.
Consequently, the coefficient is $D = 2\sqrt{\sigma}$, and the resulting convergence rate is $\mathcal{O}(\eexp{-\nu(t)}) = \mathcal{O}(\eexp{-\sqrt{\sigma}t})$.

The remaining cases of a dumping coefficient $D$, which means $D \neq 2\sqrt{\sigma}$, is taken into consideration.
For given $D$,
$\eexp{\alpha_\text{s}}$ satisfying $2\eexp{\alpha_\text{s}} \leq D \leq \eexp{\alpha_\text{s}}  + \sigma\eexp{-\alpha_\text{s}}$
can be examined by plotting the graphs of functions $y = 2x$ and $y= x + \sigma x^{-1}$.
On observing the graphs, the inequality is satisfied if $\eexp{\alpha_\text{s}} \leq \sqrt{\sigma}$.
Therefore, an arbitrary large $D$ can be chosen by taking a small $\eexp{\alpha_\text{s}}$:
\begin{equation}
	\eexp{\alpha_\text{s}} = \begin{cases}
		D/2                          & (0 < D \leq 2\sqrt{\sigma}),       \\
		[D - \sqrt{D^2 - 4\sigma}]/2 & (2\sqrt{\sigma} \leq D < +\infty).
	\end{cases}
\end{equation}
Eventually, for an arbitrary large coefficient $D$, the Lyapunov function and convergence are established
(however, $\exp(\alpha(t)) = \sqrt{\sigma}$ and $\dot{\delta}(t) = 2\sqrt{\sigma}$ is preferred from the viewpoint of the convergence rate).
The convergence rate is
\begin{equation}
	f(\bsx(t)) - f(\bsx^*) = \begin{cases}
		\mathcal{O}\left(\exp\left(-[D/2]t\right)\right)                                            & (0 < D \leq 2\sqrt{\sigma}),       \\
		\mathcal{O}\left(\exp\left(-\left[[D - \sqrt{D^2 - 4\sigma}]\middle/2\right]t\right)\right) & (2\sqrt{\sigma} \leq D < +\infty).
	\end{cases}
\end{equation}
Furthermore, the right side of the third item in Eq.~\eref{TICondition} is
\begin{equation}
	-[D-\eexp{\alpha_\text{s}}] =
	\begin{dcases}
		-D/2                          & (0 < D \leq 2\sqrt{\sigma}),       \\
		-[D + \sqrt{D^2 - 4\sigma}]/2 & (2\sqrt{\sigma} \leq D < +\infty),
	\end{dcases}
\end{equation}
and is negative. Therefore, the following results follow from \theoremref{ell2LyapunovBounds}.
\begin{corollary}
	Under \assumptionref{0}, $\bsx(t)$ of the state equation \eref{StODE} satisfies the following inequality:
	\begin{equation}
		\begin{dcases}
			\int_{t_0}^{+\infty}
			\exp\left([D/2]t\right)\frac{1}{2}\|\dot{\bsx}(t)\|_2^2\,\mathrm{d}t < + \infty & (0 < D \leq 2\sqrt{\sigma}),       \\
			\int_{t_0}^{+\infty}
			\exp\left(\big[[D + \sqrt{D^2 - 4\sigma}]/2\big]t\right)
			\frac{1}{2}\|\dot{\bsx}(t)\|_2^2\,\mathrm{d}t < + \infty                        & (2\sqrt{\sigma} \leq D < +\infty).
		\end{dcases}
	\end{equation}
\end{corollary}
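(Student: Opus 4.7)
The plan is to obtain the corollary as a direct consequence of \theoremref{ell2LyapunovBounds} applied with the parameter choice recorded in Eq.~\eref{TIParams}. All hypotheses of that theorem have already been verified by the preceding case analysis (the three inequalities of Eq.~\eref{TICondition} are precisely the content of \assumptionref{para} specialised to the steady-state parameters, and $\dot\nu(t) = \eexp{\alpha(t)}$ is imposed directly in \eref{TIParams}). Consequently, the remaining task is purely computational: substitute the steady-state values into the integral supplied by the theorem and simplify.

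Concretely, I would proceed in three short steps. First, plug $\dot\delta(t)=D$, $\dot\alpha(t)=0$, $\eexp{\alpha(t)}=\eexp{\alpha_\text{s}}$, and $\eexp{\pi(t)}=0$ into the bracket $[\dot\delta(t)+\dot\alpha(t)-\eexp{\alpha(t)}]+\eexp{\pi(t)+\alpha(t)}$ appearing inside the integral of \theoremref{ell2LyapunovBounds}; it collapses to the time-invariant constant $D-\eexp{\alpha_\text{s}}$, whose positivity is precisely what the display immediately preceding the corollary records (the quantity $-(D-\eexp{\alpha_\text{s}})$ is shown there to be negative in both regimes of $D$). Second, since $\dot\nu(t)=\eexp{\alpha_\text{s}}$ is constant, $\nu(t)=\eexp{\alpha_\text{s}}(t-t_0)+\nu(t_0)$, so $\eexp{\nu(t)}$ is a fixed positive multiple of $\exp(\eexp{\alpha_\text{s}}\,t)$. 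Third, pull the constant prefactor $(D-\eexp{\alpha_\text{s}})\,\eexp{\nu(t_0)-\eexp{\alpha_\text{s}}t_0}$ outside the integral and divide it through the bound $\leq V(t_0)$; the theorem's inequality then reduces to the finiteness of $\int_{t_0}^{+\infty}\exp(\eexp{\alpha_\text{s}}\,t)\,\tfrac{1}{2}\|\dot{\bsx}(t)\|_2^2\,\mathrm{d}t$.

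Finally, I would substitute the two closed-form values of $\eexp{\alpha_\text{s}}$ given in Eq.~\eref{TIParams}: $\eexp{\alpha_\text{s}}=D/2$ for $0<D\leq 2\sqrt\sigma$, yielding the first case of the corollary, and $\eexp{\alpha_\text{s}}=(D-\sqrt{D^2-4\sigma})/2$ for $D\geq 2\sqrt\sigma$, yielding the second. I do not anticipate a substantive obstacle; the only care needed is confirming that $D-\eexp{\alpha_\text{s}}$ is strictly positive so that dividing by it is legitimate, which at the boundary $D=2\sqrt\sigma$ reduces to $\sqrt\sigma>0$—guaranteed by the standing strong-convexity hypothesis $\sigma>0$ of this subsection. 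A point to double-check during the writeup is the precise form of the exponential rate recorded in the second case, since the direct substitution just described produces $\exp(\eexp{\alpha_\text{s}}\,t)$ in both regimes, and one must verify that this matches the stated exponent $[D+\sqrt{D^2-4\sigma}]/2$ (using the identity $\eexp{\alpha_\text{s}}(D-\eexp{\alpha_\text{s}})=\sigma$ in the heavily damped case).
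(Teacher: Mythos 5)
Your route is the same one the paper intends: specialize \theoremref{ell2LyapunovBounds} to the steady-state parameters \eref{TIParams}, note that the coefficient $[\dot{\delta}(t)+\dot{\alpha}(t)-\eexp{\alpha(t)}]+\eexp{\pi(t)+\alpha(t)}$ collapses to the positive constant $D-\eexp{\alpha_\text{s}}$ (whose sign is exactly what the display preceding the corollary records), and divide that constant out. Everything up to your final step is correct and is precisely the substitution the paper leaves implicit.

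The step you defer to ``double-check during the writeup'' is, however, a genuine gap: it cannot be closed. Your substitution produces the weight $\eexp{\nu(t)}=\exp(\eexp{\alpha_\text{s}}t)$ with $\eexp{\alpha_\text{s}}=[D-\sqrt{D^2-4\sigma}]/2$ in the heavily damped regime, whereas the corollary asserts the weight $\exp([[D+\sqrt{D^2-4\sigma}]/2]t)$; these differ strictly for every $D>2\sqrt{\sigma}$. The identity $\eexp{\alpha_\text{s}}(D-\eexp{\alpha_\text{s}})=\sigma$ you invoke only says that the two roots of $x^2-Dx+\sigma=0$ multiply to $\sigma$; it does not identify them, and no admissible choice can raise the rate, since the conditions \eref{TICondition} force $\eexp{\alpha_\text{s}}\leq [D-\sqrt{D^2-4\sigma}]/2$. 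The factor $D-\eexp{\alpha_\text{s}}=[D+\sqrt{D^2-4\sigma}]/2$ that you pull out of the integral is a multiplicative constant and cannot migrate into the exponent. What your argument actually establishes in the second case is finiteness of $\int_{t_0}^{+\infty}\exp([[D-\sqrt{D^2-4\sigma}]/2]t)\frac{1}{2}\|\dot{\bsx}(t)\|_2^2\,\mathrm{d}t$, a strictly weaker statement. The mismatch appears to originate in the corollary as printed, whose exponent coincides with the constant coefficient $D-\eexp{\alpha_\text{s}}$ rather than with the rate of $\eexp{\nu(t)}$ (the two agree only for $D\leq 2\sqrt{\sigma}$, where both equal $D/2$, which masks the discrepancy in the first case). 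You should therefore state the exponent your derivation actually yields rather than promising to reconcile it with the printed one.
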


\subsection{Case of Time-Varying Parameter:  Hyperbolic-Function-Based Parameter \cite{Ki23}}\subsectionlab{hyp}
This section presents the derivation of the parameters
with the hyperbolic functions \cite{Ki23}.
Parameters obtained in this section are summarized as follows:
\begin{equation}
	\begin{dcases}
		\alpha(t) =
		\begin{dcases}
			\log\left(\frac{\sqrt{\sigma}}{\tanh\left([\sqrt{\sigma}/2]t\right)}\right) & (\sigma > 0), \\
			\log\left(\frac{2}{t}\right)                                                & (\sigma = 0),
		\end{dcases}                                      \\
		\dot{\delta}(t)
		= \begin{dcases}
			  \frac{\sqrt{\sigma}\left[3+\tanh^2\left([\sqrt{\sigma}/2]t\right)\right]}{2\tanh\left([\sqrt{\sigma}/2]t\right)} & (\sigma>0),   \\
			  \frac{3}{t}                                                                                                      & (\sigma = 0),
		  \end{dcases} \\
		\eta(t) = 2\alpha(t),                                                                                                              \\
		\dot{\nu}(t) = \eexp{\alpha(t)}                                                                                                    \\
		\text{($\pi(t)$ is not used, i.e., $\pi(t)=-\infty$ or $\eexp{\pi(t)}=0$).}
	\end{dcases}
	\elab{hypParams}
\end{equation}

A unified model for both the two cases of $\sigma > 0$
and $\sigma = 0$ \cite{Ki23} is given as follows:
\begin{equation}
	\begin{dcases}
		\ddot{\bsx}(t) +
		\frac{\sqrt{\sigma}\left[3+\tanh^2\left([\sqrt{\sigma}/2]t\right)\right]}{2\tanh\left([\sqrt{\sigma}/2]t\right)}
		\dot{\bsx}(t) + \nabla f(\bsx(t)) = \bszero_{n_\bsx}            & (\sigma > 0), \\
		\ddot{\bsx}(t) +
		\frac{3}{t}\dot{\bsx}(t) + \nabla f(\bsx(t)) = \bszero_{n_\bsx} & (\sigma = 0).
	\end{dcases}
	\elab{hypODE}
\end{equation}
The state equation of $[\bsx(t),\bsz(t)]$ for $\sigma > 0$ is given by
\begin{equation}
	\begin{dcases}
		\dot{\bsx}(t) = \frac{\sqrt{\sigma}}{\tanh\left([\sqrt{\sigma}/2]t\right)}(\bsz(t) - \bsx(t)), \\
		\dot{\bsz}(t) = -\sqrt{\sigma}\tanh\left([\sqrt{\sigma}/2]t\right)(\bsz(t) - \bsx(t))
		- \frac{\tanh\left([\sqrt{\sigma}/2]t\right)}{\sqrt{\sigma}}\nabla f(\bsx(t)),
	\end{dcases}
	\elab{hypStateEq1}
\end{equation}
and that for $\sigma = 0$ is given by
\begin{equation}
	\begin{dcases}
		\dot{\bsx}(t) = \frac{2}{t}(\bsz(t) - \bsx(t)), \\
		\dot{\bsz}(t) = - \frac{t}{2}\nabla f(\bsx(t)).
	\end{dcases}
	\elab{hypStateEq2}
\end{equation}

The remaining part of this subsection illustrates the derivation of the aforementioned parameters \eref{hypParams} and the ODE \eref{hypODE}.
Based on the analysis of the time-invariant case, discussed in the previous section,
to simplify the design procedure, a term $\eexp{\pi(t)}$ is replaced with zero,
which means a term $(1/2)\|\bsx-\bsx^*\|_2^2$ is excluded in
a Lyapunov function.
Similar to the aforementioned time-invariant case, setting
$\dot{\nu}(t) = \eexp{\alpha(t)}$ is used to improve the convergence rate.
The required condition is as follows:
\begin{equation}
	\begin{cases}
		-\dot{\delta}(t) + \dot{\alpha}(t) + 2\eexp{\alpha(t)} \leq 0,                        \\
		\dot{\delta}(t) + \dot{\alpha}(t) - \eexp{\alpha(t)} -\sigma\eexp{-\alpha(t)} \leq 0, \\
		-\dot{\delta}(t) - \dot{\alpha}(t) + \eexp{\alpha(t)} \leq 0.
	\end{cases}
	\elab{hypCondition}
\end{equation}
Similar to the time-invariant case, the first and second items
is rearranged as follows:
\begin{equation}
	\dot{\alpha}(t) + 2\eexp{\alpha(t)} \leq \dot{\delta}(t) \leq -\dot{\alpha}(t) + \eexp{\alpha(t)} + \sigma\eexp{-\alpha(t)}.
\end{equation}
The third item results in
$- \dot{\alpha}(t) + \eexp{\alpha(t)} \leq \dot{\delta}(t)$,
which is satisfied if the second inequality holds with equality.
Therefore, the damping coefficient is set as $\dot{\delta}(t) = -\dot{\alpha}(t) + \eexp{\alpha(t)} + \sigma\eexp{-\alpha(t)}$.

The left side of the inequality above, which is $\dot{\alpha}(t) + 2\eexp{\alpha(t)}$,
needs to be as large as possible to improve the convergence rate  $\mathcal{O}(\eexp{-\nu(t)})$
under the setting $\dot{\nu}(t) = \eexp{\alpha(t)}$.
Therefore,
the case that the upper and lower bounds of $\dot{\delta}(t)$ are equal is taken into consideration here.
That is,
an equation $\dot{\alpha}(t) + 2\eexp{\alpha(t)} = -\dot{\alpha}(t) + \eexp{\alpha(t)} + \sigma\eexp{-\alpha(t)}$ is assumed.
The equation results in an ODE
$2\dot{\alpha}(t)\eexp{\alpha(t)} + \eexp{2\alpha(t)} - \sigma = 0$;
the hyperbolic-function-based parameter $\alpha(t)$ in Eq.~\eref{hypParams} is obtained by solving the ODE
(for detailed derivation, refer to Appendix~\appendixref{1}).
$\eexp{\alpha(t)}$ and $\dot{\alpha}(t)$ are calculated as follows:
\begin{equation}
	\begin{split}
		 & \eexp{\alpha(t)} =
		\begin{dcases}
			\frac{\sqrt{\sigma}}{\tanh\left([\sqrt{\sigma}/2]t\right)} & (\sigma > 0), \\
			\frac{2}{t}                                                & (\sigma = 0),
		\end{dcases}
	\end{split}
\end{equation}
and
\begin{equation}
	\dot{\alpha}(t) = \frac{1}{\eexp{\alpha(t)}}\dif{\eexp{\alpha(t)}}{t}
	=
	\begin{dcases}
		-\frac{\sqrt{\sigma}[1-\tanh^2\left([\sqrt{\sigma}/2]t\right)]}{2\tanh\left([\sqrt{\sigma}/2]t\right)} & (\sigma > 0), \\
		- \frac{1}{t}                                                                                          & (\sigma = 0).
	\end{dcases}
\end{equation}
As mentioned above, the coefficient of the dumping term $\dot{\delta}(t)\dot{\bsx}(t)$
is set as $\dot{\delta}(t) = -\dot{\alpha}(t) + \eexp{\alpha(t)} + \sigma\eexp{-\alpha(t)}$ and results in Eq.~\eref{hypParams}.
As pointed out in \cite{Ki23},
in the limit of $t\to+\infty$ with $\sigma > 0$,
the dumping coefficient $\dot{\delta}(t)$ converges to $2\sqrt{\sigma}$.
The substitution of the aforementioned setting
into the ODE \eref{ODEStandard} and state equation \eref{ELStandard}
results in those in Eq.~\eref{hypODE} and Eqs.~\eref{hypStateEq1} and \eref{hypStateEq2}, respectively.
The convergence rate is $f(\bsx(t)) - f(\bsx^*) = \mathcal{O}(\eexp{-\nu(t)})$.
On taking the upper bound of the condition $\dot{\nu}(t) \leq \eexp{\alpha(t)}$ (i.e., $\dot{\nu}(t) = \eexp{\alpha(t)}$),
the primitive function of $\eexp{\alpha(t)}$ is given by
\begin{equation}
	\nu(t) =
	\begin{dcases}
		2\log\left(\sinh\left([\sqrt{\sigma}/2]t\right)\right) & (\sigma > 0), \\
		2\log t                                                & (\sigma = 0).
	\end{dcases}
\end{equation}
On substituting the obtained parameters above into \theoremref{LyapunovBounds},
the convergence rate is given as follows:
\begin{equation}
	f(\bsx(t))-f(\bsx^*) \leq
	\begin{dcases}
		\frac{\sinh^2\left([\sqrt{\sigma}/2]t_0\right)}{\sinh^2\left([\sqrt{\sigma}/2]t\right)}
		\left(\frac{1}{2}\|\bsx^*-\bsz(t_0)\|_2^2+[f(\bsx(t_0))-f(\bsx^*)]\right) & (\sigma > 0), \\
		\frac{t_0^2}{t^2}
		\left(\frac{1}{2}\|\bsx^*-\bsz(t_0)\|_2^2+[f(\bsx(t_0))-f(\bsx^*)]\right) & (\sigma = 0),
	\end{dcases}
\end{equation}
where $t_0 > 0$.

In the inequalities \eref{hypCondition},
the first and second inequalities hold with equality,
and the third inequality results in $-\sigma\eexp{\alpha(t)} \leq 0$;
\theoremref{ell2LyapunovBounds} results in the following corollary.
\begin{corollary}
	Under \assumptionref{0} and $\sigma > 0$,
	$\bsx(t)$ of the state equation \eref{hypODE} satisfies the following inequality:
	\begin{equation}
		\int_{t_0}^{+\infty}
		\sqrt{\sigma}\sinh^2([\sqrt{\sigma}/2]t)\tanh([\sqrt{\sigma}/2]t)
		\frac{1}{2}\|\dot{\bsx}(t)\|_2^2\,\mathrm{d}t < +\infty.
	\end{equation}
\end{corollary}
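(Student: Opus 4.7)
The plan is to read off the corollary as a direct specialization of Theorem~\ref{theorem:ell2LyapunovBounds} to the hyperbolic-function-based parameters \eref{hypParams}. I would first instantiate the integrand coefficient $\eexp{\nu(t)}\bigl([\dot{\delta}(t) + \dot{\alpha}(t) - \eexp{\alpha(t)}] + \eexp{\pi(t)+\alpha(t)}\bigr)$ appearing in Theorem~\ref{theorem:ell2LyapunovBounds}. Since $\pi(t)$ is unused in this setting so that $\eexp{\pi(t)} = 0$, the second summand inside the parentheses vanishes. Substituting the design choice $\dot{\delta}(t) = -\dot{\alpha}(t) + \eexp{\alpha(t)} + \sigma\eexp{-\alpha(t)}$ from the derivation of \eref{hypParams} then collapses the remaining bracket to $\sigma\eexp{-\alpha(t)}$.

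Next I would plug in the explicit formulae $\eexp{\alpha(t)} = \sqrt{\sigma}/\tanh([\sqrt{\sigma}/2]t)$ (hence $\eexp{-\alpha(t)} = \tanh([\sqrt{\sigma}/2]t)/\sqrt{\sigma}$) and $\eexp{\nu(t)} = \sinh^2([\sqrt{\sigma}/2]t)$, both derived earlier in the subsection. The resulting coefficient simplifies to
\begin{equation}
\eexp{\nu(t)}\cdot\sigma\eexp{-\alpha(t)} = \sinh^2([\sqrt{\sigma}/2]t)\cdot\sigma\cdot\frac{\tanh([\sqrt{\sigma}/2]t)}{\sqrt{\sigma}} = \sqrt{\sigma}\sinh^2([\sqrt{\sigma}/2]t)\tanh([\sqrt{\sigma}/2]t),
\end{equation}
which matches the integrand in the corollary exactly.

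Finally I would invoke the upper bound $\int_{t_0}^{+\infty}\cdots\,\mathrm{d}t \leq V(t_0)$ from Theorem~\ref{theorem:ell2LyapunovBounds} together with the observation that $V(t_0)$ is finite under \assumptionref{0}: the auxiliary state $\bsz(t_0) = \bsx(t_0) + \eexp{-\alpha(t_0)}\dot{\bsx}(t_0)$ is finite by hypothesis and $h, f$ are differentiable, so both $\bregdiv{h}{\bsx^*}{\bsz(t_0)}$ and $f(\bsx(t_0)) - f(\bsx^*)$ are finite. There is no substantive obstacle here; the corollary is essentially a bookkeeping exercise once the substitutions from \eref{hypParams} are made. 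The only point requiring care is verifying that the bracket $[\dot{\delta}(t) + \dot{\alpha}(t) - \eexp{\alpha(t)}]$ indeed collapses to $\sigma\eexp{-\alpha(t)}$, matching the residual term from the third line of \eref{hypCondition} rather than to any other combination appearing in the same system.
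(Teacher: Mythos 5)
Your proposal is correct and follows exactly the paper's own route: specialize Theorem~\ref{theorem:ell2LyapunovBounds} with $\eexp{\pi(t)}=0$, use $\dot{\delta}(t) = -\dot{\alpha}(t) + \eexp{\alpha(t)} + \sigma\eexp{-\alpha(t)}$ to reduce the bracket to $\sigma\eexp{-\alpha(t)}$, and substitute the hyperbolic expressions for $\eexp{\nu(t)}$ and $\eexp{-\alpha(t)}$ to obtain the stated integrand bounded by the finite constant $V(t_0)$. Your computation in fact confirms that the residual from the third line of \eref{hypCondition} is $-\sigma\eexp{-\alpha(t)}$ (the paper's in-text ``$-\sigma\eexp{\alpha(t)}$'' appears to be a typo), and the sign you carry through is the one consistent with the corollary.
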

However, in the case of $\sigma = 0$,
the third item of the inequalities \eref{hypCondition} holds with equality (i.e., $-\dot{\delta}(t) - \dot{\alpha}(t) + \eexp{\alpha(t)} = 0$);
under the equality and $\eexp{\pi(t)}=0$,
\theoremref{ell2LyapunovBounds} claims a trivial inequality $0 \leq V(t_0)$.
Handling of this issue is presented in the next subsection.
\begin{remark}
	Although the model \eref{ExistingODE} (subsection 4.1 of \cite{Ki23})
	and the model \eref{ELStandard} of this paper, derived from $\eta(t) = 2\alpha(t)$,
	have difference expressions,
	the substitution of the parameter setting of this subsection results in
	the same model \eref{hypODE}; this remark examines this behavior.
	The parameter setting is summarized as follows:
	\begin{itemize}
		\item In both \cite{Ki23} and this paper,
		      the same $\alpha(t) = \log\left(\sqrt{\sigma}/\tanh\left([\sqrt{\sigma}/2]t\right)\right)$ is used.
		\item $\beta(t)$ in \cite{Ki23}, which does not appears in this paper,
		      is set as $\beta(t) = 2\log\left(\sinh\left([\sqrt{\sigma}/2]t\right)/\sqrt{\sigma}\right)$,
		      and satisfies $\dot{\beta}(t) = \eexp{\alpha(t)}$ (the upper bound of the condition $\dot{\beta}(t) \leq \eexp{\alpha(t)}$).
		\item $\gamma(t)$, satisfying $\dot{\gamma}(t) = \eexp{\alpha(t)}$, is not explicitly shown \cite{Ki23}.
	\end{itemize}
	The setting above indicates that
	$\gamma(t)$ is equal to $\beta(t)$ except for the scaling with a constant;
	therefore, the remaining part focuses on $\beta(t)$.

	The setting of this paper $\eta(t)=2\alpha(t)$ and $h(\bsx) = (1/2)\|\bsx\|_2^2$ results in the Lagrangian
	\begin{equation}
		\mathcal{L}(\bsx,\bsv,t) = \eexp{\delta(t)}\left(\eexp{2\alpha(t)}\mathcal{D}_{(1/2)\|\cdot\|_2^2}(\bsx+\eexp{-\alpha(t)}\bsv,\bsx)-f(\bsx)\right)
		= \eexp{\delta(t)}\left(\frac{1}{2}\|\bsv\|_2^2 - f(\bsx)\right).
	\end{equation}
	For $\alpha(t) = \log\left(\sqrt{\sigma}/\tanh\left([\sqrt{\sigma}/2]t\right)\right)$,
	on using
	$\dot{\delta}(t) = \dot{\alpha}(t) + 2\eexp{\alpha(t)} = \dot{\alpha}(t) + 2\dot{\beta}(t)$
	and an integral constant $K$, the Lagrangian is expressed as
	\begin{equation}
		\begin{split}
			\mathcal{L}(\bsx,\bsv,t)
			 & = \eexp{\delta(t)}\left(\eexp{2\alpha(t)}\mathcal{D}_h(\bsx+\eexp{-\alpha(t)}\bsv,\bsx)-f(\bsx)\right)                                       \\
			 & = \eexp{K + \alpha(t)+\beta(t)}\left(\eexp{2\alpha(t)+\beta(t)}\mathcal{D}_h(\bsx+\eexp{-\alpha(t)}\bsv,\bsx)-\eexp{\beta(t)}f(\bsx)\right).
		\end{split}
		\elab{ConvertLagrangian}
	\end{equation}
	The identity of the hyperbolic functions $1/\tanh^2(\xi) = 1 + 1/\sinh^2(\xi)$, derived from $\cosh^2(\xi) - \sinh^2(\xi) = 1$,
	results in the following equation:
	\begin{equation}
		\eexp{2\alpha(t)+\beta(t)} = \frac{\sinh^2\left([\sqrt{\sigma}/2]t\right)}{\tanh^2\left([\sqrt{\sigma}/2]t\right)}
		= 1 + \frac{1}{\sinh^2\left([\sqrt{\sigma}/2]t\right)} = 1 + \sigma\eexp{\beta(t)}.
	\end{equation}
	Actually, the existing study (p.~29 of \cite{Ki23}) derives the hyperbolic-function-based parameters by
	taking into consideration the aforementioned equality $\eexp{2\alpha(t)+\beta(t)} = 1 + \sigma\eexp{\beta(t)}$ and a condition $\dot{\beta}(t) = \eexp{\alpha(t)}$.

	Consequently, the substitution of the aforementioned parameters and
	the scaling with a constant in the Lagrangian of \cite{Ki23} (Eq.~(4) of \cite{Ki23} and Eq.~\eref{ExistingLagrangian2} of this paper)
	result in the same Lagrangian \eref{ConvertLagrangian} of this paper.
\end{remark}

\subsection{Case of Time-Varying Dumping Coefficient $(C/t)\dot{\bsx}(t)$}\subsectionlab{C/t}
Without the assumption of the strong convexity ($\sigma=0$), the ODE
\begin{equation}
	\ddot{\bsx}(t) + \frac{C}{t}\dot{\bsx}(t) + \nabla f(\bsx(t)) = \bszero_{n_\bsx}
	\elab{C/tODE}
\end{equation}
is taken into consideration in this subsection.
An associated Lyapunov function has been already given in an existing study \cite{Su16} and an improved result \cite{At19},
this subsection explores parameters satisfying \assumptionref{general}
and derives a Lyapunov function of the form of \theoremref{Lyapunov2}.
Parameters obtained in this subsection is summarized as follows:
\begin{equation}
	\begin{dcases}
		\alpha(t) =	\log\left((2C)/(3t)\right), \\
		\dot{\delta}(t)
		= C/t,                                  \\
		\eta(t) = 2\alpha(t),                   \\
		\dot{\nu}(t) =
		\begin{dcases}
			(2C)/(3t) & (0 < C \leq 3),       \\
			2/t       & (3 \leq C < +\infty),
		\end{dcases}       \\
		\pi(t) =
		\begin{dcases}
			\log\left((3 -C)/(2C)\right) & (0 < C < 3),       \\
			-\infty                      & (C = 3),           \\
			\log\left((C-3)/(2C)\right)  & (3 < C < +\infty).
		\end{dcases}
	\end{dcases}
	\elab{C/tParams}
\end{equation}
The substitution of the parameters above into the state equation \eref{ELStandard},
which assumes the Bregman divergence is introduced by the $\ell_2$-norm,
results in the following
state equation of $\bsx$ and $\bsz(t) = \bsx(t) + ((3t)/(2C))\dot{\bsx}(t)$:
\begin{equation}
	\begin{dcases}
		\dot{\bsx}(t) = \frac{2C}{3t}[\bsz(t) - \bsx(t)], \\
		\dot{\bsz}(t) = -\frac{C-3}{3t}[\bsx(t) - \bsz(t)]
		- \frac{3t}{2C}\nabla f(\bsx(t)).
	\end{dcases}
	\elab{C/tStateEq}
\end{equation}

The main strategy of this subsection is that
$\dot{\nu}(t)$ needs to be as large as possible
to improve the convergence rate $\mathcal{O}(\eexp{-\nu(t)})$.
The dumping coefficient is set as $\dot{\delta}(t) = C/t$ because of the pre-assigned dumping term $(C/t)\dot{\bsx}(t)$.
To find parameters satisfying \assumptionref{general},
the form of $\alpha(t)$ is assumed to be $\eexp{\alpha(t)} = \lambda/t$ and $\dot{\alpha}(t) = -1/t$,
and that of $\nu(t)$ is set as $\eexp{\nu(t)} = t^{\lambda^\prime}$ for some constants $\lambda$ and $\lambda^\prime$.
On substituting the setting above, \assumptionref{general2} results in the following inequalities:
\begin{equation}
	\begin{cases}
		\dot{\nu}(t) = \lambda^\prime/t\leq \lambda/t,                                       \\
		(-C-1+\lambda + \lambda\eexp{\pi(t)}+\lambda^\prime)/t \leq 0,                       \\
		(C-1-\lambda)/t + (\lambda^\prime -2 +\dot{\pi}(t)t -\lambda)\eexp{\pi(t)}/t \leq 0, \\
		(-C+1+\lambda-\lambda\eexp{\pi(t)})/t \leq 0.
	\end{cases}
	\elab{ell2Condition}
\end{equation}

For $0 < C \leq 3$,
$\dot{\nu}(t) =\lambda^\prime/t \leq \eexp{\alpha(t)} = \lambda/t$ holds with equality
(i.e., $\lambda^\prime \leq \lambda$) as shown in the subsequent calculation.
The following inequalities are given:
\begin{equation}
	\begin{cases}
		(-C-1+2\lambda + \lambda\eexp{\pi(t)})/t \leq 0,          \\
		(C-1-\lambda + (-2+\dot{\pi}(t)t)\eexp{\pi(t)})/t \leq 0, \\
		(-C+1+\lambda-\lambda\eexp{\pi(t)})/t \leq 0.
	\end{cases}
\end{equation}
On taking $\lambda = 2C/3$ and $\eexp{\pi(t)} = (1 -C/3)/(2C/3)$,
the first and third inequalities hold with equality, and
the second inequality results in $(C/3 - 3/C)/t \leq 0$ and is satisfied;
therefore, the convergence rate is
$\mathcal{O}(t^{-\lambda}) = \mathcal{O}(t^{-2C/3})$.
The Lyapunov function obtained here is eventually consistent with that in \cite{At19}.
At $C=3$, $\eexp{\pi(t)}$ is replaced with zero and the same discussion follows.

The case of $3 < C < +\infty$ is not covered by the aforementioned analysis.
Therefore, a condition of the analysis above is relaxed;
the assumption that $\nu(t)$ reaches the upper bound $\dot{\nu}(t) = \eexp{\alpha(t)}$ is
replaced by an inequality $\dot{\nu}(t) \leq \eexp{\alpha(t)}$,
which results in $\lambda^\prime \leq \lambda$.
The inequalities \eref{ell2Condition} are reexamined here.

The possibility of setting $\dot{\nu}(t) = \lambda^\prime/t > 2/t$,
which means the convergence rate $f(\bsx(t)) - f(\bsx^*) = \mathcal{O}(1/t^{\lambda^\prime})\, (\lambda^\prime > 2)$,
is taken into consideration.
The rearrangement of the third item of the inequalities \eref{ell2Condition} results in
$(C-1-\lambda)/t + ([\lambda^\prime -2] +\dot{\pi}(t)t -\lambda)\eexp{\pi(t)}/t \leq 0$.
With the second item of Eq.~\eref{ell2Condition},
an inequality $\dot{\pi}(t)\eexp{\pi(t)} = \mathrm{d}\eexp{\pi(t)}/\mathrm{d}t \leq -2[\lambda^\prime -2]/t$ is obtained;
however, a solution $\eexp{\pi(t)}>0$ seems unavailable. Therefore,
$\lambda^\prime = 2$ is assumed and results in the following inequalities:
\begin{equation}
	\begin{cases}
		(-C+1+\lambda + \lambda\eexp{\pi(t)})/t \leq 0,                   \\
		(C-1-\lambda)/t  + (\dot{\pi}(t)t -\lambda)\eexp{\pi(t)}/t\leq 0, \\
		(-C+1+\lambda - \lambda\eexp{\pi(t)})/t \leq 0.
	\end{cases}
\end{equation}
On setting $\lambda = \lambda^\prime =2$ and $\eexp{\pi(t)} = (C-1 -\lambda)/\lambda = (C-3)/(2C)$,
the first and second inequalities holds with equality, and
the third inequality results in $-2(C-3)/(3t) \leq 0$; the inequalities above are fully satisfied.
Therefore, the setting $\dot{\nu}(t) = \lambda^\prime/t$ and $\lambda^\prime =2$
results in $\dot{\nu}(t) = 2/t$, $\nu(t) = 2\log t$,
and the convergence rate $\mathcal{O}(\eexp{-\nu(t)}) = \mathcal{O}(t^{-2})$.
The setting above is consistent with a Lyapunov function given in \cite{Su16}.

Consequently, the convergence rate, depending on the value of $C$, is given as follows:
\begin{equation}
	f(\bsx(t)) - f(\bsx^*) =
	\begin{dcases}
		\bigO{t^{-2C/3}} & (0 < C \leq 3),       \\
		\bigO{t^{-2}}    & (3 \leq C < +\infty).
	\end{dcases}
\end{equation}

While various inequalities similar to the form of \theoremref{LyapunovBounds} are
derived in \cite{At19}, this study illustrates the application example of \theoremref{ell2LyapunovBounds} to
$\dot{\bsx}(t)$.
\begin{corollary}
	Under \assumptionref{0}, $\bsx(t)$ of the state equation \eref{C/tODE} satisfies the following inequalities:
	\begin{equation}
		\begin{dcases}
			\int_{t_0}^{+\infty}
			t^{\lambda^\prime - 1}
			\frac{1}{2}\|\dot\bsx(t)\|_2^2\,\mathrm{d}t < +\infty                      & (0 < C \leq 3, \, \lambda^\prime < (2C)/3), \\
			\int_{t_0}^{+\infty}t\frac{1}{2}\|\dot\bsx(t)\|_2^2\,\mathrm{d}t < +\infty & (3 < C < +\infty).
		\end{dcases}
	\end{equation}
\end{corollary}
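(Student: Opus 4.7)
The plan is to derive both inequalities directly from \theoremref{ell2LyapunovBounds} by substituting appropriate parameter choices consistent with \assumptionref{para}, and then reading off the coefficient
$\eexp{\nu(t)}\bigl([\dot{\delta}(t)+\dot{\alpha}(t)-\eexp{\alpha(t)}]+\eexp{\pi(t)+\alpha(t)}\bigr)$
that multiplies $\tfrac{1}{2}\|\dot{\bsx}(t)\|_2^2$ inside the integral. Throughout, the model parameters are $\eexp{\alpha(t)}=2C/(3t)$, $\dot{\alpha}(t)=-1/t$, $\dot{\delta}(t)=C/t$ and $\eta(t)=2\alpha(t)$, as in \eref{C/tParams}. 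I treat the two ranges of $C$ separately because each requires a different $(\nu,\pi)$.

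\textbf{Case $3<C<+\infty$.} I would plug in exactly the values already justified in the text preceding the corollary: $\dot{\nu}(t)=2/t$ (so $\eexp{\nu(t)}\propto t^{2}$) and $\eexp{\pi(t)}=(C-3)/(2C)$ (constant, so $\dot{\pi}(t)=0$). Since \assumptionref{para} was verified for this choice in the derivation of \eref{C/tParams}, \theoremref{ell2LyapunovBounds} applies. A short computation gives
\begin{equation}
[\dot{\delta}(t)+\dot{\alpha}(t)-\eexp{\alpha(t)}]+\eexp{\pi(t)+\alpha(t)}
=\frac{C-1}{t}-\frac{2C}{3t}+\frac{C-3}{2C}\cdot\frac{2C}{3t}
=\frac{2(C-3)}{3t},
\end{equation}
which is strictly positive for $C>3$. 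Multiplying by $\eexp{\nu(t)}\propto t^{2}$ produces an integrand proportional to $t$, and dividing out the positive constant $2(C-3)/3$ yields the second claimed inequality.

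\textbf{Case $0<C\leq 3$.} Here the parameter choice in \eref{C/tParams} uses $\dot{\nu}(t)=2C/(3t)$ and $\eexp{\pi(t)}=(3-C)/(2C)$, for which the sixth item of \assumptionref{para} is saturated and the integrand coefficient vanishes. The idea is therefore to relax $\nu$: fix any $\lambda'$ with $0<\lambda'<2C/3$, set $\dot{\nu}(t)=\lambda'/t$ (so $\eexp{\nu(t)}\propto t^{\lambda'}$), and pick $\eexp{\pi(t)}=(3+C-3\lambda')/(2C)$, constant in $t$. I would then verify \assumptionref{para} item by item: the first three are inherited from \eref{C/tParams}; the fourth (coefficient of $\bregdiv{h}{\bsx^{*}}{\bsz}$) saturates by construction; the fifth (coefficient of $\bregdiv{h}{\bsx^{*}}{\bsx}$) is $(C-3)/(3t)+(\lambda'-2-2C/3)\eexp{\pi(t)}/t$, which is a sum of two nonpositive terms since $C\leq 3$ and $\lambda'<2C/3\leq 2$; and the sixth reduces to $(3\lambda'-2C)/(3t)\leq 0$, which holds strictly by the choice of $\lambda'$. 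Consequently \theoremref{ell2LyapunovBounds} gives
\begin{equation}
\frac{2C-3\lambda'}{3}\int_{t_0}^{+\infty} t^{\lambda'-1}\cdot\tfrac{1}{2}\|\dot{\bsx}(t)\|_2^{2}\,\mathrm{d}t\leq V(t_0),
\end{equation}
and dividing by the positive factor $(2C-3\lambda')/3$ produces the first claimed inequality.

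The only real obstacle is the $0<C\leq 3$ branch, because the optimally tuned parameters in \eref{C/tParams} cause the integrand to vanish identically; the fix is to trade a sharp convergence exponent for strict positivity by lowering $\dot\nu$ to $\lambda'/t$ with $\lambda'<2C/3$ and enlarging $\eexp{\pi(t)}$ accordingly, then re-checking that \assumptionref{para} still holds. Verifying the fifth item after this shift is the bookkeeping step that deserves most care, but the sign analysis above shows it goes through automatically in the regime $C\leq 3$.
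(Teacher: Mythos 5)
Your proposal is correct and follows essentially the same route as the paper's proof: for $3<C$ the parameters of Eq.~\eref{C/tParams} already make the last condition of \assumptionref{para} strictly negative (with coefficient $2(C-3)/(3t)$), and for $0<C\leq 3$ you lower $\dot{\nu}(t)$ to $\lambda^\prime/t$ with $\lambda^\prime<2C/3$ while enlarging $\eexp{\pi(t)}$ --- your choice $(3+C-3\lambda^\prime)/(2C)$ is algebraically identical to the paper's $(3-C)/(2C)+(\lambda-\lambda^\prime)/\lambda$. The only difference is cosmetic: you verify the fifth condition by splitting it into two manifestly nonpositive terms, whereas the paper groups it around $C/3-3/C$.
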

\begin{proof}
	In the case of $0 < C \leq 3$ with the parameters \eref{C/tParams},
	\theoremref{ell2LyapunovBounds} does not straightforwardly work because the forth item of Eq.~\eref{ell2Condition} holds with equality.
	Therefore, $\nu(t)$ and $\pi(t)$ are modified in this proof.
	While the aforementioned analysis set $\lambda^\prime$ as its upper bound  $\lambda^\prime = \lambda$,
	an inequality $\lambda^\prime \leq \lambda = (2C)/3$ is assumed here,
	and $\eexp{\pi(t)} = (1 -C/3)/(2C/3)$ in the analysis above is also
	relaxed as $\eexp{\pi(t)} = (1 -C/3)/(2C/3) + (\lambda - \lambda^\prime)/\lambda$,
	which takes a larger value than $\eexp{\pi(t)} = (1 -C/3)/(2C/3)$.
	The second item of Eq.~\eref{ell2Condition} holds with equality, and
	the third and forth items are satisfied by
	\begin{equation}
		\begin{dcases}
			\left(
			\frac{C}{3} - \frac{3}{C} - [\lambda-\lambda^\prime]\eexp{\pi(t)} - \frac{2[\lambda-\lambda^\prime]}{\lambda}
			\right)\frac{1}{t} < 0, \\
			-[\lambda-\lambda^\prime]/t < 0,
		\end{dcases}
	\end{equation}
	respectively. Therefore, \theoremref{ell2LyapunovBounds} is applicable and indicates that
	the first inequality of the corollary is obtained.
	The second item of the corollary follows because
	the derivation of the parameters of this subsection has already confirmed that
	the forth item of Eq.~\eref{ell2Condition} results in $-2(C-3)/(3t) \leq 0$,
	and the left side $-2(C-3)/(3t)$ is strictly negative for $0 < C < 3$;
	consequently, \theoremref{ell2LyapunovBounds} is applicable.
\end{proof}

\section{Extension to Smoothing}\sectionlab{smoothing}
This section allows a nondifferentiable objective function $f$. A slight modification to the models and the Lyapunov functions
makes the analysis of the paper applicable to
an ODE with the smooth approximation for a nondifferentiable objective function minimization.
First, the smooth approximation of a (possibly nondifferentiable) function is defined as follows (Definition 10.43 of \cite{Be17a}):
\begin{definition}\definitionlab{1}
	A convex function $h:\R^{n_\bsx}\to\R$ and
	a function $\hh{\cdot}{\mu}:\R^{n_\bsx}\times\R_{>0}\to\R$ which is convex and differentiable with respect to $\bsx$
	are considered.
	If $\hh{\cdot}{\mu}$ satisfies the conditions below for an arbitrary $\mu>0$,
	$\hh{\cdot}{\mu}$ is called a $(1/\mu)$-smooth approximation of $h$ with parameters $(\alpha,\beta)$:
	\begin{enumerate}
		\item For an arbitrary $\bsx\in\R^{n_\bsx}$,
		      an inequality $\hh{\bsx}{\mu} \leq h(\bsx) \leq \hh{\bsx}{\mu} + \beta \mu$ holds $(\beta > 0)$.
		\item $\hh{\cdot}{\mu}$ is $(\alpha/\mu)$-smooth $(\alpha > 0)$.
	\end{enumerate}
\end{definition}
A smooth approximation $\hh{\bsx}{\mu}$ of $h$ has two arguments: an original variable $\bsx$ and
a smoothing parameter $\mu$. Hereinafter,
the partial derivative with respect to these two arguments are denoted by
$\nabla_\bsx\hh{\bsx}{\mu}$ and $\nabla_\mu\hh{\bsx}{\mu}$, respectively.
The following definition, focusing on the partial derivative with respect to the smoothing parameter,
is introduced \cite{To23a}.
\begin{definition}\definitionlab{3}
	A $(1/\mu)$-smooth approximation of $h$ with parameters $(\alpha,\beta)$
	is considered. For an arbitrary $\bsx\in\R^{n_\bsx}$ and $\mu>0$, if an inequality
	\begin{equation}
		-\beta \leq \nabla_{\mu}\hh{\bsx}{\mu} \leq 0
		\elab{Definition2}
	\end{equation}
	holds, $h$ is called
	a $(1/\mu)$-Lipschitz continuous smooth approximation of $h$ with parameters $(\alpha,\beta)$.
\end{definition}

In general, $\mu(t)$ is nonincreasing and $\dot{\mu}(t) \leq 0$,
and the condition \eref{Definition2} of the Lipschitz continuous smooth approximation is exploited in the form of
\begin{equation}
	0 \leq \nabla_{\mu}\hh{\bsx}{\mu(t)}\dot{\mu}(t) \leq -\beta\dot{\mu}(t).
	\elab{DefinitionLipschitzSmooth_2}
\end{equation}

$f(\bsx)$ in the aforementioned Lagrangian \eref{Lagrangian} is replaced with the smooth approximation $\ff{\bsx}{\mu(t)}$.
The simple calculation results in the following corollary, which claims that
a simple modification of the aforementioned Lagrangian in smooth objective functions
leads to a nonsmooth extension of the Euler--Lagrange system.
\begin{corollary}\corollarylab{SmoothingEL}
	Under the same setting as \theoremref{EL},
	$f(\bsx)$ is replaced with the a $(1/\mu(t))$-smooth approximation $\ff{\bsx}{\mu(t)}$:
	\begin{equation}
		\mathcal{L}(\bsx,\bsv,t) = \eexp{\delta(t)}\left(\eexp{\eta(t)}\mathcal{D}_h(\bsx+\eexp{-\alpha(t)}\bsv,\bsx)-\ff{\bsx}{\mu(t)}\right).
		\elab{SmoothingLagrangian}
	\end{equation}
	Then, the Euler--Lagrange equation results in
	the ODE and state equation \eref{stateEq} whose gradient $\nabla f(\bsx(t))$ is replaced with $\nabla_\bsx\ff{\bsx(t)}{\mu(t)}$.
\end{corollary}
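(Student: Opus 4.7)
The plan is to observe that replacing $f(\bsx)$ with $\ff{\bsx}{\mu(t)}$ in the Lagrangian changes only the $\bsx$-derivative part of the Euler--Lagrange computation, because the smooth approximation depends on $\bsx$ and (through $\mu(t)$) on $t$, but not on $\bsv$. Consequently almost every line of the proof of \theoremref{EL} transfers verbatim, and only a single gradient symbol needs to be swapped at the end.

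First, I would point out that the Bregman--divergence term $\eexp{\eta(t)}\mathcal{D}_h(\bsx+\eexp{-\alpha(t)}\bsv,\bsx)$ in the modified Lagrangian \eref{SmoothingLagrangian} is literally identical to the one in \eref{Lagrangian}. Therefore $\nabla_\bsv\mathcal{L}(\bsx,\dot{\bsx},t)$, and its total time derivative $\tdif{}{t}\nabla_\bsv\mathcal{L}(\bsx(t),\dot{\bsx}(t),t)$, coincide with the expression computed in \eref{ddtdLdv}; no new $\mu(t)$-terms arise because $\ff{\cdot}{\mu(t)}$ contributes nothing to the $\bsv$-partial derivative.

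Next, I would compute $\nabla_\bsx\mathcal{L}(\bsx,\dot{\bsx},t)$. Since $\ff{\cdot}{\mu}$ is differentiable in its first argument by \definitionref{1}, the only change relative to the proof of \theoremref{EL} is that the term $-\eexp{\delta(t)}\nabla f(\bsx)$ becomes $-\eexp{\delta(t)}\nabla_\bsx\ff{\bsx}{\mu(t)}$; all other summands, including the $-\eexp{\delta(t)+\eta(t)-\alpha(t)}\nabla^2 h(\bsx)\dot{\bsx}$ piece that cancels in the Euler--Lagrange equation, are unchanged. Substituting the two partial derivatives into \eref{EL}, performing the same cancellation, and using $\dot{\bsz}(t)=(1-\dot{\alpha}(t)\eexp{-\alpha(t)})\dot{\bsx}(t)+\eexp{-\alpha(t)}\ddot{\bsx}(t)$ exactly as before, yields the claimed state equation and ODE with $\nabla_\bsx\ff{\bsx(t)}{\mu(t)}$ in place of $\nabla f(\bsx(t))$.

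There is essentially no main obstacle: the only conceptual point to verify is that treating $\bsx,\bsv,t$ as independent arguments in the Euler--Lagrange formalism correctly handles the implicit time dependence through $\mu(t)$, and this is automatic because $\mu(t)$ enters the Lagrangian only as an explicit function of $t$, so it does not generate any extra chain-rule contributions in either $\nabla_\bsx\mathcal{L}$ or $\nabla_\bsv\mathcal{L}$. The corollary therefore reduces to a one-symbol substitution in the statement of \theoremref{EL}.
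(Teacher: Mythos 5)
Your proposal is correct and matches the paper's (implicit) argument: the paper states the corollary as following from ``the simple calculation,'' namely rerunning the proof of \theoremref{EL} and noting that $\ff{\bsx}{\mu(t)}$ contributes nothing to $\nabla_\bsv\mathcal{L}$, so only the $\nabla f(\bsx)$ term in $\nabla_\bsx\mathcal{L}$ is replaced by $\nabla_\bsx\ff{\bsx}{\mu(t)}$. Your additional remark that $\mu(t)$ enters only as explicit time dependence and hence produces no extra chain-rule terms is exactly the point that makes the one-symbol substitution legitimate.
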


The following Lyapunov function is also given using a simple modification taking into consideration the smooth approximation.
\begin{theorem}\theoremlab{SmoothingLyapunov}
	Under Assumptions \ref{assumption:0} and \ref{assumption:general},
	the state equation \eref{stateEq} whose gradient $\nabla f(\bsx(t))$ is replaced with
	that of a $(1/\mu(t))$-Lipschitz continuous smooth approximation $\nabla_\bsx\ff{\bsx(t)}{\mu(t)}$ is
	considered. The following function $V(t)$ of $\bsx(t)$ and $\bsz(t) = \bsx(t) + \eexp{-\alpha(t)}\dot{\bsx}(t)$ in the state equation \eref{stateEq} is nonincreasing on $[t_0,T]$, which means $\dot{V}(t) \leq  0\,(t\in[t_0,T])$:
	\begin{equation}
		V(t) = \eexp{\nu(t)}\left( \eexp{\eta(t)}\mathcal{D}_h(\bsx^*,\bsz(t)) + [\widetilde{f}(\bsx(t),\mu(t)) + \beta\mu(t) - \widetilde{f}(\bsx^*,\mu(t)) ] \right).
		\elab{SmoothingLyapunov}
	\end{equation}
\end{theorem}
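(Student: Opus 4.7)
The plan is to mirror the proof of \theoremref{Lyapunov} while accounting for the explicit $\mu(t)$-dependence in $\widetilde{f}$ and the additive $\beta\mu(t)$ correction in $V(t)$. First, the $\eexp{\nu(t)+\eta(t)}\mathcal{D}_h(\bsx^*,\bsz(t))$-block is differentiated exactly as in \theoremref{Lyapunov}: using Eq.~\eref{bregdivdiff} together with the modified state equation from \corollaryref{SmoothingEL} (where $\nabla f(\bsx(t))$ is replaced by $\nabla_\bsx\ff{\bsx(t)}{\mu(t)}$), the Bregman three-point identity \eref{threepoint} reproduces the three $\mathcal{D}_h$-contributions of Eq.~\eref{dDhdt}.

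Second, the chain rule applied to the remaining block, using $\dot{\bsx}(t) = \eexp{\alpha(t)}(\bsz(t)-\bsx(t))$, gives
\begin{equation*}
\dif{}{t}\bigl[\ff{\bsx(t)}{\mu(t)} + \beta\mu(t) - \ff{\bsx^*}{\mu(t)}\bigr]
= \eexp{\alpha(t)}\nabla_\bsx^\top\ff{\bsx(t)}{\mu(t)}[\bsz(t)-\bsx(t)]
+ \bigl[\nabla_\mu\ff{\bsx(t)}{\mu(t)} + \beta - \nabla_\mu\ff{\bsx^*}{\mu(t)}\bigr]\dot{\mu}(t).
\end{equation*}
I would treat the two summands separately. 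The $\nabla_\bsx$-summand (multiplied by $\eexp{\nu(t)}$) combines with the $\nabla_\bsx\widetilde{f}$-term from the $\mathcal{D}_h(\bsx^*,\bsz(t))$-block and with $\dot{\nu}(t)\eexp{\nu(t)}\widetilde{F}(t)$, where $\widetilde{F}(t) := \ff{\bsx(t)}{\mu(t)} + \beta\mu(t) - \ff{\bsx^*}{\mu(t)}$. The non-negativity $\widetilde{F}(t) \geq 0$ follows from item~1 of \definitionref{1} (since $f(\bsx(t)) \geq f(\bsx^*)$ is sandwiched by $\widetilde{F}(t)$), so the $\dot{\nu}(t) \leq \eexp{\alpha(t)}$ majorization combined with the $\sigma$-uniform convexity of $\ff{\cdot}{\mu(t)}$ with respect to $h$ recovers the same three $\mathcal{D}_h$-coefficients as in Eq.~\eref{Ds}, all nonpositive by \assumptionref{general}.

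Third, for the $\dot{\mu}$-summand I would invoke \definitionref{3} applied to $\ff{\cdot}{\cdot}$. The Lipschitz bounds $-\beta \leq \nabla_\mu\ff{\bsx}{\mu(t)} \leq 0$, together with $\dot{\mu}(t) \leq 0$ as in Eq.~\eref{DefinitionLipschitzSmooth_2}, yield $[\nabla_\mu\ff{\bsx(t)}{\mu(t)}+\beta]\dot{\mu}(t) \leq 0$ from the upper bound in \eref{Definition2} and $-\nabla_\mu\ff{\bsx^*}{\mu(t)}\dot{\mu}(t) \leq 0$ from the lower bound; summing, the whole residual is nonpositive. Combined with the \assumptionref{general}-driven nonpositivity of the $\mathcal{D}_h$-coefficients above, one concludes $\dot{V}(t) \leq 0$.

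The main obstacle will be the careful tracking of the dual role played by the $\beta\mu(t)$ correction inside $V(t)$: on one hand it must keep $\widetilde{F}(t) \geq 0$ via \definitionref{1}, so that the $\dot{\nu}(t) \leq \eexp{\alpha(t)}$ majorization preserves the sign and the transcription of the proof of \theoremref{Lyapunov} goes through; on the other hand its time derivative $\beta\dot{\mu}(t)$ must pair with the worst-case lower bound of $\nabla_\mu\ff{\bsx(t)}{\mu(t)}$ from \definitionref{3} to render the $\dot{\mu}$-residual nonpositive. These two pairings are the only places where the smoothing machinery enters in a non-trivial way; everything else is a verbatim replay of the smooth-case analysis.
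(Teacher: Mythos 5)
Your architecture follows the paper's proof: you reuse the $\mathcal{D}_h(\bsx^*,\bsz(t))$ computation of \theoremref{Lyapunov} verbatim with $\nabla f$ replaced by $\nabla_\bsx\ff{\cdot}{\mu(t)}$, split the time derivative of the objective block into a $\nabla_\bsx$-part and a $\nabla_\mu$-part, and control the latter with \definitionref{3}. Your sign analysis of the $\dot\mu$-residual is correct, though the attributions are swapped: the lower bound $-\beta\le\nabla_\mu\ff{\bsx}{\mu}$ in \eref{Definition2} is what gives $\nabla_\mu\ff{\bsx(t)}{\mu(t)}+\beta\ge0$, and the upper bound $\nabla_\mu\ff{\bsx}{\mu}\le0$ is what gives $-\nabla_\mu\ff{\bsx^*}{\mu(t)}\ge0$; with $\dot\mu(t)\le0$ both products are then nonpositive.

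The gap is in your second step. After the majorization $\dot\nu(t)\eexp{\nu(t)}\widetilde F(t)\le\eexp{\nu(t)+\alpha(t)}\widetilde F(t)$ and the recombination with the two gradient terms you are left with
\begin{equation}
	\eexp{\nu(t)+\alpha(t)}\left(\nabla_\bsx^\top\ff{\bsx(t)}{\mu(t)}[\bsx^*-\bsx(t)]+\widetilde F(t)\right)
	=\eexp{\nu(t)+\alpha(t)}\left(-\bregdiv{\ff{\cdot}{\mu(t)}}{\bsx^*}{\bsx(t)}+\beta\mu(t)\right),
\end{equation}
and the term $+\eexp{\nu(t)+\alpha(t)}\beta\mu(t)$ (or $+\beta\dot\nu(t)\eexp{\nu(t)}\mu(t)$ if you leave that piece unmajorized) is strictly positive whenever $\dot\nu(t)>0$ and has nothing to cancel against. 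The additive $\beta\mu(t)$ in $V(t)$ does cancel the worst case $-\beta\dot\mu(t)$ coming from $\nabla_\mu\ff{\bsx(t)}{\mu(t)}\dot\mu(t)$, as you observe, but it simultaneously re-enters through the $\dot\nu(t)\eexp{\nu(t)}$ prefactor, and no hypothesis of the theorem kills this contribution; your claim that one "recovers the same three $\mathcal{D}_h$-coefficients as in Eq.~\eref{Ds}" therefore does not yield $\dot V(t)\le0$. This is not a trick you missed: the paper's own proof stops at $\dot V(t)\le\beta\dot\nu(t)\eexp{\nu(t)}\mu(t)$ (so the displayed claim $\dot V(t)\le0$ is not what is actually established there either), and \theoremref{SmoothingLyapunovBounds} carries the residue along as $\beta\int_{t_0}^{t}\dot\nu(\tau)\eexp{\nu(\tau)}\mu(\tau)\,\mathrm{d}\tau$. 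A secondary point: you invoke the $\sigma$-uniform convexity of $\ff{\cdot}{\mu(t)}$ with respect to $h$, but \assumptionref{0} grants this only for $f$; the smooth approximation is only assumed convex, so for $\sigma>0$ that step also needs separate justification.
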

\begin{proof}
	In the proof of \theoremref{Lyapunov},
	the effect by replacing $f(\bsx(t)) - f(\bsx^*)$ with
	$\widetilde{f}(\bsx(t),\mu(t)) + \beta\mu(t) - \widetilde{f}(\bsx^*,\mu(t))$ is examined.
	On recalling $\ff{\bsx}{\mu}$ has two arguments,
	the time derivative of $\ff{\bsx(t)}{\mu(t)}$ is calculated as follows:
	\begin{equation}
		\begin{split}
			\dif{\ff{\bsx(t)}{\mu(t)}}{t} & = \nabla_\bsx^\top\ff{\bsx(t)}{\mu(t)}\dot{\bsx}(t) + \nabla_\mu\ff{\bsx(t)}{\mu(t)}\dot{\mu}(t)                      \\
			                              & = \eexp{\alpha(t)}\nabla_\bsx^\top \ff{\bsx(t)}{\mu(t)}[\bsz(t)-\bsx(t)] + \nabla_\mu\ff{\bsx(t)}{\mu(t)}\dot{\mu}(t) \\
			                              & \leq \eexp{\alpha(t)}\nabla_\bsx^\top \ff{\bsx(t)}{\mu(t)}[\bsz(t)-\bsx(t)] -\beta\dot{\mu}(t).
		\end{split}
		\elab{dffdt}
	\end{equation}
	In the inequality, the condition of the Lipschitz continuous smooth approximation,
	which is $\nabla_{\mu}\ff{\bsx}{\mu(t)}\dot{\mu}(t) \leq -\beta\dot{\mu}(t)$ for arbitrary $\bsx$, is used.
	Besides, the other inequality of the Lipschitz continuous smooth approximation
	$0 \leq \nabla_{\mu}\ff{\bsx}{\mu(t)}\dot{\mu}(t)$ for arbitrary $\bsx$ results in
	\begin{equation}
		- \dif{\ff{\bsx^*}{\mu(t)}}{t} = - \nabla_\mu\ff{\bsx^*}{\mu(t)}\dot{\mu}(t) \leq 0.
	\end{equation}
	To cancel out a term $-\beta\dot{\mu}(t)$ in the right side of Eq.~\eref{dffdt},
	an additional term $\eexp{\nu(t)}\beta\mu(t)$ is included in the Lyapunov function.
	By using the inequalities above, the time derivative of the Lyapunov function $\dot{V}(t)$ results in
	that of \eref{Ds2} whose right side has an additional term $\beta\dot{\nu}(t)\eexp{\nu(t)}\mu(t)$,
	which means $\dot{V}(t) \leq \beta\dot{\nu}(t)\eexp{\nu(t)}\mu(t)$.
\end{proof}
The following theorem claims that the similar formulae on the convergence rates hold
in nonsmooth convex objective functions.
\begin{theorem}\theoremlab{SmoothingLyapunovBounds}
	Under the same setting as \theoremref{SmoothingLyapunov}, the following inequalities hold on $t\in[t_0,T]$:
	\begin{equation}
		\begin{dcases}
			f(\bsx(t)) - f(\bsx^*) \leq \eexp{-\nu(t)}\left(V(t_0) + \beta\int_{t_0}^{t}\dot{\nu}(\tau)\eexp{\nu(\tau)}\mu(\tau)\,\mathrm{d}\tau\right),                       \\
			\eexp{\eta(t)}\mathcal{D}_h(\bsx^*,\bsz(t)) \leq \eexp{-\nu(t)}\left(V(t_0) + \beta\int_{t_0}^{t} \dot{\nu}(\tau)\eexp{\nu(\tau)}\mu(\tau)\,\mathrm{d}\tau\right), \\
			\int_{t_0}^{+\infty}
			\eexp{\nu(t)+\eta(t)}[\dot{\delta}(t) + \dot{\eta}(t) - \dot{\alpha}(t)-\eexp{\alpha(t)}]\mathcal{D}_h(\bsz(t),\bsx(t))\,\mathrm{d}t \leq V(t_0)
			+ \beta\int_{t_0}^{+\infty} \dot{\nu}(\tau)\eexp{\nu(\tau)}\mu(\tau)\,\mathrm{d}\tau.
		\end{dcases}
	\end{equation}
\end{theorem}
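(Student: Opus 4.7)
The plan is to mimic the proof of \theoremref{LyapunovBounds}, with the single new ingredient being that the Lyapunov function is no longer monotone. From the proof of \theoremref{SmoothingLyapunov}, we already have $\dot{V}(t) \leq \beta\dot{\nu}(t)\eexp{\nu(t)}\mu(t)$, and integrating from $t_0$ to $t$ yields the central estimate
\[
V(t) \leq V(t_0) + \beta\int_{t_0}^{t}\dot{\nu}(\tau)\eexp{\nu(\tau)}\mu(\tau)\,\mathrm{d}\tau,
\]
which is the engine for the first two claims.

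For the first inequality, I would first check that the bracketed expression in $V(t)$ of \eref{SmoothingLyapunov} is non-negative. The Bregman divergence is non-negative by convexity of $h$, and applying both sides of the bound in \definitionref{1} gives the chain $\ff{\bsx(t)}{\mu(t)} + \beta\mu(t) \geq f(\bsx(t)) \geq f(\bsx^*) \geq \ff{\bsx^*}{\mu(t)}$, so $\ff{\bsx(t)}{\mu(t)} + \beta\mu(t) - \ff{\bsx^*}{\mu(t)} \geq 0$. The same chain yields $f(\bsx(t)) - f(\bsx^*) \leq \ff{\bsx(t)}{\mu(t)} + \beta\mu(t) - \ff{\bsx^*}{\mu(t)}$, so $\eexp{\nu(t)}[f(\bsx(t)) - f(\bsx^*)] \leq V(t)$, and the central estimate then provides the first claim. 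For the second inequality, since the $f$-term in $V(t)$ is non-negative, we drop it to obtain $\eexp{\nu(t)+\eta(t)}\bregdiv{h}{\bsx^*}{\bsz(t)} \leq V(t)$, and the central estimate again yields the claim.

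For the third inequality, I would revisit the intermediate step of the proof of \theoremref{SmoothingLyapunov}, where $\dot{V}(t)$ is expressed as the three-term right-hand side of \eref{Ds} augmented by $\beta\dot{\nu}(t)\eexp{\nu(t)}\mu(t)$. Under \assumptionref{general}, the coefficients of $\bregdiv{h}{\bsx^*}{\bsz(t)}$ and $\bregdiv{h}{\bsx^*}{\bsx(t)}$ in that expression are non-positive, so dropping those two terms still gives a valid upper bound of the form
\[
\dot{V}(t) \leq -\eexp{\nu(t)+\eta(t)}[\dot{\delta}(t) + \dot{\eta}(t) - \dot{\alpha}(t)-\eexp{\alpha(t)}]\bregdiv{h}{\bsz(t)}{\bsx(t)} + \beta\dot{\nu}(t)\eexp{\nu(t)}\mu(t).
\]
Integrating from $t_0$ to $t$, moving the $\bregdiv{h}{\bsz(t)}{\bsx(t)}$-integral to the left side with a sign change, and using $V(t) \geq 0$ to absorb $V(t_0) - V(t)$ into $V(t_0)$ before letting $t \to +\infty$ produces the third inequality.

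The main obstacle is purely bookkeeping rather than technical: because $V(t)$ is no longer monotone, the additional $\beta$-integral must be carried along on the right-hand side of each bound rather than absorbed into $V(t_0)$, and the non-negativity of $V(t)$ must be invoked to discard the $-V(t)$ term when passing to the integral estimate. No new analytic machinery beyond that already used in \theoremref{LyapunovBounds} is required.
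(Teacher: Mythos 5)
Your proposal is correct and follows essentially the same route as the paper: integrate $\dot{V}(t)\leq\beta\dot{\nu}(t)\eexp{\nu(t)}\mu(t)$ to get the shifted bound $V(t)\leq V(t_0)+\beta\int_{t_0}^{t}\dot{\nu}(\tau)\eexp{\nu(\tau)}\mu(\tau)\,\mathrm{d}\tau$, lower-bound $V(t)$ by $\eexp{\nu(t)}\bigl(\eexp{\eta(t)}\mathcal{D}_h(\bsx^*,\bsz(t))+f(\bsx(t))-f(\bsx^*)\bigr)$ via the sandwich property of the smooth approximation, and obtain the integral estimate by dropping the other nonpositive terms and using $V(t)\geq 0$. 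The only (cosmetic) difference is that you derive the lower bound on $V(t)$ directly from the function-value sandwich in \definitionref{1}, which is arguably the cleaner citation than the derivative condition the paper invokes for the same step.
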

\begin{proof}
	On calculating the integrals of both the sides of $\dot{V}(t) \leq \beta\dot{\nu}(t)\eexp{\nu(t)}\mu(t)$,
	an inequality
	$V(t) - V(t_0) \leq \beta\int_{t_0}^t \dot{\nu}(\tau)\eexp{\nu(\tau)}\mu(\tau)\,\mathrm{d}\tau$ is obtained.
	An inequality
	\begin{equation*}
		\begin{split}
			V(t) & = \eexp{\nu(t)}\left( \eexp{\eta(t)}\mathcal{D}_h(\bsx^*,\bsz(t)) + [\widetilde{f}(\bsx(t),\mu(t)) + \beta\mu(t) - \widetilde{f}(\bsx^*,\mu(t)) ] \right) \\
			     & \geq  \eexp{\nu(t)}\left( \eexp{\eta(t)}\mathcal{D}_h(\bsx^*,\bsz(t)) + f(\bsx(t)) - f(\bsx^*) \right)
		\end{split}
	\end{equation*} follows from Eq.~\eref{DefinitionLipschitzSmooth_2}, obtained by \definitionref{3} of the Lipschitz continuous smooth approximation,
	and results in the first and second inequalities of the theorem.
	The formulae in the form of the integral are obtained by the similar modification of the proof of \theoremref{LyapunovBounds}.
	The inequality of $\mathcal{D}_h(\bsx^*,\bsz(t))$ is taken as an example here,
	and the following inequality follows:
	\begin{equation}
		\begin{split}
			 & -V(t_0) \leq V(t) - V(t_0)                                                                                                                     \\
			 & \leq \int_{t_0}^{t} \eexp{\nu(t)+\eta(t)}(-[\dot{\delta}(t) + \dot{\eta}(t) - \dot{\alpha}(t)-\eexp{\alpha(t)}])\mathcal{D}_h(\bsz(t),\bsx(t))
			\,\mathrm{d}t + \beta\int_{t_0}^t \dot{\nu}(\tau)\eexp{\nu(\tau)}\mu(\tau)\,\mathrm{d}\tau.
		\end{split}
	\end{equation}
	On rearranging and taking the limit $t\to+\infty$,
	the inequality is obtained (similar inequalities hold with respect to the Bregman divergence terms
	$\mathcal{D}_h(\bsx^*,\bsz(t))$ and $\mathcal{D}_h(\bsx^*,\bsx(t))$, but omitted here).
\end{proof}
The same discussion holds in the aforementioned case of the symmetric Bregman divergence.
\theoremref{SmoothingLyapunovBounds} suggests that
the effect of the smooth approximation does not appears in the convergence rate
if $\int_{t_0}^{t}\dot{\nu}(\tau)\eexp{\nu(\tau)}\mu(\tau)\,\mathrm{d}\tau$ is bounded
(e.g,
smoothing parameters $\mu(t)$ satisfying $\dot{\nu}(t)\eexp{\nu(t)}\mu(t) = \eexp{-\varepsilon t}$ and
$\dot{\nu}(t)\eexp{\nu(t)}\mu(t) = t^{-(1+\varepsilon)}$ with a positive number $\varepsilon>0$).
Particularly, for the hyperbolic-function-based parameters of \subsectionref{hyp},
equations
$\dot{\nu}(t) = \eexp{\alpha(t)} = \sqrt{\sigma}/\tanh\left([\sqrt{\sigma}/2]t\right)$ and
$\eexp{\nu(t)} = \sinh^2\left([\sqrt{\sigma}/2]t\right)$
result in
$\mu(t) = \eexp{-\varepsilon t}\tanh\left([\sqrt{\sigma}/2]t\right)/\left(\sqrt{\sigma}\sinh^2\left([\sqrt{\sigma}/2]t\right)\right)$
and
$\mu(t) = t^{-(1+\varepsilon)}\tanh\left([\sqrt{\sigma}/2]t\right)/\left(\sqrt{\sigma}\sinh^2\left([\sqrt{\sigma}/2]t\right)\right)$,
respectively, and the convergence rates are the same as those without the smoothing approximation;
however, in discretized models on the discrete-time domain,
the resulting convergence rate is generally inferior to that in a smooth objective function
because of an increasing Lipschitz-smoothness parameter and associated stepsize issues (see \cite{To23a} for further details).

\section{Concluding Remarks}
In this section, the objective function $f$ is assumed to be nondifferentiable.
This paper took into consideration
the ODEs and state equations of the continuous-time accelerated gradient methods.
A unified Lagrangian and its Lyapunov-function-based convergence analysis were presented,
and it was pointed out that
existing various models have been included in the proposed framework.
Besides, this study founds that the existing mysterious dumping coefficients are somewhat reasonable
from the viewpoint of the convergence rate.

\section{Acknowledgment}
This work was supported by JSPS KAKENHI Grant Number JP22K14279.

\appendix
\section{Derivation of Hyperbolic-Function-Based Solution of ODE}\appendixlab{1}
The derivation of $\alpha(t)$ in \subsectionref{hyp} is omitted in \cite{Ki23}.
For readers' convenience, $\alpha(t)$ is explicitly obtained based on the ODE
$2\dot{\alpha}(t)\eexp{\alpha(t)} + \eexp{2\alpha(t)} - \sigma = 0$ in this appendix.
Variable transform $\eexp{\alpha(t)} = \chi(t)$ is introduced and results in
\begin{equation}
	\dot{\chi}(t) = \frac{1}{2}(\sigma - \chi^2(t)).
	\elab{hypParaODE}
\end{equation}
An inequality $\sigma \neq \chi^2(t)$ is assumed, and the separation of variables is applied as follows:
\begin{equation}
	\frac{1}{2}\int\,\mathrm{d}t
	= \int\frac{1}{\sigma - \chi^2}\,\mathrm{d}\chi
	= \frac{1}{2\sqrt{\sigma}}\int \left(\frac{1}{\sqrt{\sigma}+\chi} + \frac{1}{\sqrt{\sigma}-\chi} \right)\,\mathrm{d}\chi.
\end{equation}
It should be noted that the case of $\chi^2(t)= \eexp{2\alpha(t)} = \sigma$ results in
the time-invariant dumping coefficient $2\sqrt{\sigma}\dot{\bsx}(t)$ of \subsectionref{TI}.
Satisfying $\chi(t) < \sqrt{\sigma}$ or $\chi(t) > \sqrt{\sigma}$
appears in the sign of the primitive function of the second term in the integrand:
\begin{enumerate}
	\item The case of $\chi(t) > \sqrt{\sigma}$:
	      $\alpha(t)$ shown in this paper is obtained.
	      The primitive function is $\log(\chi + \sqrt{\sigma})-\log(\chi -\sqrt{\sigma})$,
	      and on using a integral constant $K$,
	      an equation $\sqrt{\sigma}t + K = \log([\chi(t) + \sqrt{\sigma}]/[\chi - \sqrt{\sigma}])$ is obtained.
	      On calculating the exponential of both the sides and reducing with respect to $\chi(t)$, the solution is
	      \begin{equation}
		      \chi(t) = \sqrt{\sigma}\cdot\frac{\eexp{\sqrt{\sigma}t+K}+1}{\eexp{\sqrt{\sigma}t+K}-1}.
	      \end{equation}
	      On letting $K = 0$ and multiplying the numerator and denominator by $\eexp{-[\sqrt{\sigma}/2]t}$,
	      $\chi(t) = \eexp{\alpha(t)} = \sqrt{\sigma}/\tanh\left([\sqrt{\sigma}/2]t\right)$ is given.
	      Besides, in the limit of $K\to +\infty$, meaning
	      1. the initial value of $\chi(t)$ is set as $\sqrt{\sigma}$ and the right side of the ODE \eref{hypParaODE} becomes zero or
	      2. the integral is started from the point at infinity,
	      the limit $\chi(t) \to \sqrt{\sigma}$ holds and is consist with
	      $\eexp{\alpha(t)}=\sqrt{\sigma}$ in the time-invariant dumping coefficient $2\sqrt{\sigma}$.
	      These are equivalent to the claim of \cite{Ki23} that
	      in the limit of $t\to\infty$, the ODE \eref{hypODE}, using the hyperbolic-function-based parameters,
	      asymptotically converges to the ODE \eref{StODE}.
	\item The case of $\chi(t) < \sqrt{\sigma}$: $\chi(t) = \sqrt{\sigma}\tanh\left([\sqrt{\sigma}/2]t\right)$ is the solution;
	      however, the associated convergence rate is slower than that of aforementioned $\chi(t) > \sqrt{\sigma}$ and is not preferred.
\end{enumerate}

\end{document}